\newtheorem{theorem}{Theorem}[section]
\theoremstyle{plain}
\newtheorem{corollary}[theorem]{Corollary}
\newtheorem{definition}{Definition}[section]
\newtheorem{example}[theorem]{Example}
\newtheorem{lemma}[theorem]{Lemma}
\newtheorem{proposition}[theorem]{Proposition}
\numberwithin{equation}{section}
\newtheorem{remark}[theorem]{Remark}
\renewcommand{\Re}{\operatorname{Re}}
\renewcommand{\Im}{\operatorname{Im}}
\newcommand{\R}{\mathbb{R}}
\newcommand{\C}{\mathbb{C}}
\newcommand{\aba}{\bar{\alpha}}
\newcommand{\bba}{\bar{\beta}}
\newcommand{\wba}{\bar{w}}
\newcommand{\zba}{\bar{z}}
\DeclareMathOperator{\rk}{rk}
\DeclareMathOperator{\Aut}{Aut}
\newcommand{\heis}{\mathbb{H}^3}
\newcommand{\heisN}{\mathbb{H}^{2N+1}}
\newcommand{\heisn}{\mathbb{H}^{2n+1}}
\begin{document}
\title{On CR maps between hyperquadrics and Winkelmann hypersurfaces}
\author{Michael Reiter}
\address{Fakultät für Mathematik, Universität Wien, Oskar-Morgenstern-Platz 1, 1090 Wien, Austria}
\email{m.reiter@univie.ac.at}
\author{Duong Ngoc Son}
\address{Faculty of Fundamental Sciences, PHENIKAA University, Hanoi 12116, Vietnam}
\email{son.duongngoc@phenikaa-uni.edu.vn}
\thanks{Part of this work was done while the second-named author was supported by the Vietnam Institute for Advanced Study in Mathematics, Hanoi, Vietnam (July--September, 2023).}
\date{17 January 2024.}
\subjclass[2020]{Primary 32H35; Secondary 32V40, 53A40}
\begin{abstract}
In this paper, we study CR maps between hyperquadrics and Winkelmann hypersurfaces. Based on a previous study on the CR Ahlfors derivative of Lamel--Son and a recent result of Huang--Lu--Tang--Xiao on CR maps between hyperquadrics, we prove that a transversal CR map from a hyperquadric into a hyperquadric or a Winkelmann hypersurface extends to a local holomorphic isometric embedding with respect to certain K\"ahler metrics if and only if the Hermitian part of its CR Ahlfors derivative vanishes on an open set of the source. Our proof is based on relating the geometric rank of a CR map into a hyperquadric and its CR Ahlfors derivative.
\end{abstract}
\maketitle

\section{Introduction}

In this paper, we study CR maps between real hypersurfaces in complex spaces of different dimensions. We are particularly interested in those with symmetry algebras of high dimension. It is known from the work of Cartan \cite{cartan1935domaines}, Tanaka \cite{tanaka1962}, and Chern--Moser \cite{chern1974} that for a Levi-nondegenerate hypersurface in \(\C^{n+1}\), its symmetry algebra has dimension at most \(n^2 + 4n +3\), see also \cite{kruglikov}. This maximal value is attained for spheres and hyperquadrics: For any $\ell$ and $n$, \(0 \leq \ell \leq n/2\), the \textit{hyperquadric of signature \(\ell\)} is defined by \(\rho_{\mathbb{H}_{\ell}^{2n+1}}=0\), where
\[ 
\rho_{\mathbb{H}_{\ell}^{2n+1}}:=\Im (w) - \sum_{k=1}^{n} \epsilon_k |z_k|^2, \qquad (z,w) \in \mathbb{C}^{n} \times \mathbb{C}.
\]
Here \(\epsilon_k = -1\) for \(1\leq k \leq \ell\) (there is no such \(k\) if \(\ell = 0\)) and \(\epsilon_k = 1\) for \(k=\ell,\dots n\).
When \(\ell = 0\), the hyperquadric is the Heisenberg hypersurface which is strictly pseudoconvex and CR diffeomorphic to the sphere
\[ 
\mathbb{S}^{2n+1} : = \left\{(z_1,\dots, z_{n+1}) \in \mathbb{C}^{n+1} \mid |z_1|^2 + |z_2|^2 + \cdots + |z_{n+1}|^2 -1 = 0\right\}
\]
with one point deleted.

The sphere and hyperquadrics are important models of Levi-nondegenerate hypersurfaces. The study of CR maps between spheres (sphere maps for short) has been addressed in a lot of articles, see, e.g., \cite{Alexander77, AHJY16, Webster79b, faran1982maps, dangelo1988proper, CJX06, Lebl11, AHJY16, CJY19}, and the book by D'Angelo \cite{DAngeloBook21} for more references.
In the case of hyperquadrics of positive signatures, the CR maps, which we refer to as \emph{hyperquadric maps}, have also been studied extensively. We mention, for example, an important paper by Baouendi--Huang \cite{baouendi2005super} which establishes the super-rigidity for such CR maps. After the publication of this paper, there 
is a large literature devoted to the study of CR maps between hyperquadrics, see \cite{BEH08, BEH11, DAngelo07a, LP11, GLV14, DAngelo15, Reiter16a, GN21a}.

An useful invariant for the study of hyperquadric maps (including sphere maps) is the \textit{geometric rank} for such maps, which was first defined by Huang for the sphere case in \cite{Huang99} and later for hyperquadrics of general signatures in \cite{huang2020boundary}. This invariant has played an important role in many papers; we mention here, for examples, the papers \cite{Huang99, huang2001mapping, Huang03, Hamada05, HJX06, Ji10, HJY14}.  We observe that the definition of the geometric rank relies on the possibility of normalizing the map in question using the (large and explicit) CR automorphism group of the target. It is not clear at the moment how can we generalize this definition to the case of CR maps into a target that is \textit{not} a hyperquadric.

Another spherical invariant for sphere maps was introduced by Lamel--Son \cite{lamel2021cr}, namely the \emph{CR Ahlfors derivative} for a CR map \(H\), denoted by \(\mathcal{A}(H)\). We refer the readers to \cite{lamel2021cr} for the origin and motivation of this notion and to \cite{stowe} for its conformal counterpart. Briefly, the CR Ahlfors derivative of a transversal CR immersion is a 2-tensor, which can be defined for the case of arbitrary hypersurfaces. If \(H\) is a sphere map, as first observed by Lamel--Son, the rank of the Hermitian part \(\mathcal{A}_{\alpha\bba}(H)\) of its CR Ahlfors derivative agrees with its geometric rank. It is not unexpected that we observe the same invariant property as well as its agreement with the geometric rank of the CR Ahlfors derivative in the case of (transversal) hyperquadric maps. But more interestingly, we discover an example of hypersurfaces such that this invariant property also holds. Specifically, we find that the CR Ahlfors derivative for CR maps between a hyperquadric and a \emph{Winkelmann hypersurface} (see below) also possesses an invariant property with respect to composition with CR automorphisms.

We should point out that a Winkelmann hypersurface, like a hyperquadric, also has symmetry algebra of quite large dimension. This fact is one of the reasons we consider the Winkelmann hypersurface as a target. In fact, it has the \textit{second} largest possible dimension of the symmetry algebra (among those hypersurfaces of the same dimension), referred to as the sub-maximal dimension case. More specifically, for hypersurfaces in \(\mathbb{C}^{n^{\prime}+2}\), under the Levi-nondegeneracy condition, the sub-maximal dimension is \((n'+1)^2+4\), which is attained for the hypersurface $\mathcal{W}^{2n'+3}_{\ell'} \subset \mathbb{C}^{n^{\prime}+2}$ (\(1\leq \ell' \leq n'\)) given by \cite{kruglikov}
\begin{equation}\label{ef}
\rho_{\mathcal{W}^{2n^{\prime }+3}_{\ell'}}:=\Im(w + \bar{z}_{n'} \zeta) - |z_{n'}|^4 - \sum_{k=1}^{n'-1} \epsilon_k'|z_k|^2 = 0, \quad n^{\prime} \geq 1,
\end{equation}
where \((z_1,\dots, z_{n'}, \zeta, w)\) are holomorphic coordinates in \(\C^{n^{\prime}+2}\) and \(\epsilon_k' = -1\) for \( 1\leq k \leq \ell' - 1\) (there is no such \(k\) if \(\ell' = 1\)) and \(\epsilon_k' = 1\) for \(k=\ell',\dots n'-1\), has exactly $(n'+1)^2 + 4$ independent symmetries. This is a generalization of a hypersurface in \(\mathbb{C}^3\) (corresponding to \(n^{\prime}=1\)) of Levi signature \(\ell'=1\), which first appears in \cite[page 146]{winkelmann}. 
It plays an important role in the study of homogeneous CR manifolds in \(\mathbb{C}^3\), see e.g.  \cite{doubrov2021} and the references therein. We shall refer to the hypersurface $\mathcal{W}^{2n'+3}_{\ell'} \subset \mathbb{C}^{n'+2}$ for arbitrary \(n'\geq 1\) and \(1 \leq \ell' \leq n'\) as the Winkelmann hypersurface of dimension \(2n'+3\) and of Levi signature~\(\ell'\).

The classification problem for CR maps from and into a Winkelmann hypersurface is surely interesting, but not yet studied. In this paper, from the invariant property of the CR Ahlfors derivative, we can introduce a notion of geometric rank for CR maps from and into a Winkelmann hypersurface and characterize being the restriction of a local holomorphic isometric embedding of certain K\"ahler metrics in terms of the vanishing of the CR Ahlfors derivative. This characterization is analogous to a recent result of Huang--Lu--Tang--Xiao \cite{huang2020boundary}. In fact, we shall use their result in our proof. More specifically, the main result of this paper is as follows.
\begin{theorem}\label{thm:main1} Let \(U \subset \mathbb{H}^{2n+1}_{\ell}\) be an open subset of the hyperquadric and \(H\colon U \to\mathcal{W}_{\ell'}^{2n'+3}\) a CR transversal smooth CR map. Let \(\mathcal{A}_{\alpha\bba}(H)\) be the Hermitian part of the CR Ahlfors tensor of \(H\) with respect to ``standard'' pseudo-Hermitian structures of the source and target. Then \(\mathcal{A}_{\alpha\bba}(H) = 0\) on \(U\) if and only if \(H\) extends to a local isometric embedding of the indefinite ``canonical'' K\"ahler metrics.
\end{theorem}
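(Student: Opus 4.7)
The strategy is to reduce Theorem~\ref{thm:main1} to its analogue for maps between hyperquadrics, namely the Huang--Lu--Tang--Xiao theorem, by lifting $H$ through an explicit CR embedding of the Winkelmann hypersurface into a hyperquadric of one larger dimension and the same signature. The key algebraic observation is the identity
\[
\Im(\bar z_{n'}\zeta) = \tfrac12\, |z_{n'}+i\zeta|^2 - \tfrac12\, |z_{n'}|^2 - \tfrac12\, |\zeta|^2,
\]
which allows one to rewrite the Winkelmann defining equation \eqref{ef} as $\Im(w) - \sum_k \epsilon_k |Z_k|^2 = 0$, with the $Z_k$ given by explicit holomorphic polynomials in $(z,\zeta)$. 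Up to a permutation of target components matching the signature convention, the polynomial map
\[
\Psi(z_1,\dots,z_{n'},\zeta,w) = \bigl(z_1,\dots,z_{n'-1},\; z_{n'}^{2},\; \tfrac{z_{n'}}{\sqrt{2}},\; \tfrac{\zeta}{\sqrt{2}},\; \tfrac{z_{n'}+i\zeta}{\sqrt{2}},\; w\bigr)
\]
satisfies the pointwise identity $\Psi^{\ast}\rho_{\mathbb H^{2n'+7}_{\ell'}} = \rho_{\mathcal W^{2n'+3}_{\ell'}}$ and embeds $\mathcal W^{2n'+3}_{\ell'}$ into $\mathbb H^{2n'+7}_{\ell'}$.

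Since the canonical indefinite K\"ahler metrics on both sides have K\"ahler potentials proportional to $-\log|\rho|$, this identity immediately promotes $\Psi$ to a local holomorphic isometric embedding of the canonical K\"ahler metrics, so in particular its CR Ahlfors tensor vanishes identically. I would then establish the expected chain rule
\[
\mathcal A_{\alpha\bba}(\Psi\circ H) = \mathcal A_{\alpha\bba}(H)
\]
by unpacking the definition of the CR Ahlfors tensor in terms of transverse derivatives of defining functions and the CR conformal factors relating the ``standard'' contact forms to $\rho_{\mathcal W}$ and $\rho_{\mathbb H}$. Tracking these normalizations is the main bookkeeping task, but the vanishing of $\mathcal A_{\alpha\bba}(\Psi)$ ensures that the contribution from $\Psi$ drops out.

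With the chain rule in place, $\tilde H := \Psi\circ H$ is a transversal CR map between hyperquadrics with the same Hermitian Ahlfors tensor as $H$, and the Huang--Lu--Tang--Xiao theorem gives
\[
\mathcal A_{\alpha\bba}(H)\equiv 0 \quad\Longleftrightarrow\quad \tilde H \text{ extends to a local holomorphic isometric embedding } \tilde F.
\]
The ``if'' direction of Theorem~\ref{thm:main1} is then immediate by post-composing an isometric extension of $H$ with $\Psi$. For the ``only if'' direction, $\tilde F|_U = \Psi\circ H$ takes values in the complex submanifold $\Psi(\mathcal W^{2n'+3}_{\ell'})\subset \C^{n'+4}$; by holomorphic unique continuation the polynomial equations cutting out this submanifold remain satisfied by $\tilde F$ on a full ambient neighborhood, yielding a factorization $\tilde F = \Psi\circ F$ that produces the required isometric extension $F$ of $H$. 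The main technical obstacles are the chain-rule step and the unique-continuation factorization; both should succumb to direct computation thanks to the explicit polynomial form of $\Psi$.
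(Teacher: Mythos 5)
Your proposal follows essentially the same route as the paper: the paper's proof uses the polynomial embedding $\Phi(z,\zeta,w)=\bigl(z_1,\dots,z_{n'-1},z_{n'}^2,\tfrac{z_{n'}+i\zeta}{2},\tfrac{z_{n'}-i\zeta}{2},w\bigr)$ into $\mathbb{H}^{2n'+5}$ with $\Phi^{\ast}\rho_{\mathbb{H}}=\rho_{\mathcal{W}}$, the Lamel--Son chain rule $\mathcal{A}(\Phi\circ H)=\mathcal{A}(H)+H^{\ast}\mathcal{A}(\Phi)$, and Corollary~\ref{cor33}, exactly as you do (your factorization of the isometric extension through $\Psi$ via unique continuation supplies a detail the paper leaves to the reader). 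The only flaw is a sign slip in your polarization identity --- the correct version is $\Im(\bar z_{n'}\zeta)=\tfrac12|z_{n'}|^2+\tfrac12|\zeta|^2-\tfrac12|z_{n'}+i\zeta|^2$ --- which swaps which components carry $\epsilon_k=-1$ and hence the target signature, but this is absorbed by the permutation of components you already allow.
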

In Theorem~\ref{thm:main1}, the CR Ahlfors tensor should be defined with respect to special pseudo-Hermitian structures on the source and target.
Specifically, on the hyperquadric we consider the usual pseudo-Hermitian structure
\[
\theta = i \bar{\partial} \rho_{\mathbb{H}_{\ell}^{2n+1}},
\]
which has vanishing pseudo-Hermitian torsion and curvature. Similarly, on the Winkelmann hypersurface \(\mathcal{W}_{\ell'}^{2n'+3}\), we consider
\[
\Theta = i \bar{\partial} \rho_{\mathcal{W}_{\ell'}^{2n'+3}}
\]
as the ``canonical'' pseudohermitian structure.

The indefinite K\"ahler metrics on the complements of the source and target are some kind
of canonical metrics. Precisely, we consider on the ``upper'' domain
\[
\mathcal{U}^{n+1}_{\ell} = \left\{p \in \C^{n+1} \mid \rho_{\mathbb{H}_{\ell}^{2n+1}} > 0\right\},
\]
equipped with the K\"ahler metric
\[
\omega_{\ell}^{n+1} = i\partial \bar{\partial} \log \left(\frac{1}{ \rho_{\mathbb{H}_{\ell}^{2n+1}}}\right),
\]
which is a K\"ahler metric of constant holomorphic sectional curvature. It is positive definite when \(\ell = 0\) and indefinite of signature \(\ell\) otherwise \((\ell > 0)\). 

Similarly, we consider the half space defined by the Winkelmann hypersurface
\[
\Omega^{n'+2}_{\ell'} =\left\{ (z_1, \dots, z_{n'}, \zeta, w) \in \C^{n'+2} \mid \rho_{\mathcal{W}_{\ell'}^{2n'+3}} > 0\right\}.
\]
On \(\Omega\) we consider the K\"ahler metric given by 
\begin{equation}\label{e:winkelman-kahler}
\omega_{\mathcal{W}_{\ell'}^{2n'+3}} : = i \partial\overline{\partial} \log \left(\frac{1}{\rho_{\mathcal{W}_{\ell'}^{2n'+3}}}\right).
\end{equation}
It is interesting to note that \(\omega_{\mathcal{W}_{\ell'}^{2n'+3}}\) is an indefinite K\"ahler-Einstein metric whose Ricci curvature equals $n'+2$. Our main theorem says that if \(H\) is CR transversal, preserves sides, and has CR Ahlfors derivative vanishing on an open set of the hyperquadric, $H$ must extend to a local isometric embedding from  \(\mathcal{U}^{n+1}_{\ell}\) into \(\Omega^{n'+2}_{\ell'}\), equipped with these metrics. The side preserving condition is natural and often appear in the study of CR maps of Levi-nondegenerate real hypersurfaces with positive signature as in, e.g., \cite{baouendi2005super}. If the map is side reversing, then we should change the side to which the map extends.

The proof of the main theorem relies on a version of a result of Huang--Lu--Tang--Xiao \cite{huang2020boundary}, stated in Corollary \ref{cor33}, and the equality of the rank of the Hermitian part of the CR Ahlfors and the geometric rank (Corollary \ref{cor:equality}).

The paper is organized as follows. In Section 2, we briefly review some basic facts related to CR Schwarzian and Ahlfors derivatives. In Section 3, we revisit, for the sake of completeness, the case of CR maps between spheres and hyperquadrics. We shall give a detailed proof of the coincidence of the geometric rank of a CR map and the rank of the Hermitian part of its CR Ahlfors tensor. Applying a recent work of Huang--Lu--Tang--Xiao \cite{huang2020boundary}, we give a characterization of CR maps with vanishing Ahlfors derivative. In Section 4, we will characterize all CR M\"obius pseudo-Hermitian structures with respect to a special choice of a pseudo-Hermitian structure. The notion of CR M\"obius structures was studied in \cite{son2018}; they are related to the invariant property of the CR Ahlfors derivative. We will also show that the CR Ahlfors derivative for a CR map into \(\mathcal{W}_{\ell'}^{2n'+3}\) possesses an invariant property, with respect to composing with an automorphism of \(\mathcal{W}_{\ell'}^{2n'+3}\). In Section 5, we characterize the CR maps from a sphere or hyperquadric into a Winkelmann hypersurface with vanishing CR Ahlfors derivative. Finally, in Section 6, we will provide various examples of CR maps from a hyperquadric into a Winkelmann hypersurface.

\section{The CR Schwarzian and Ahlfors derivative}
In this section, we briefly review the notions of CR Schwarzian and Ahlfors derivatives and their basic properties.

Let \((M,\theta)\) and \((N,\eta)\) be pseudo-Hermitian manifolds and let \(F \colon M\to N\) be a CR transversal CR immersion. Then 
\[ F^{\ast} \eta = \varphi\, \theta\]
for some function \(\varphi\) on \(M\), which is non-vanishing because of the transversality of \(F\). For simplicity, we suppose that \(\varphi > 0\) and set \(u = \log \varphi\). Following Lamel--Son \cite{lamel2021cr} we define
\begin{equation}
\mathcal{H}_{\theta} (v) = \operatorname{Sym} \nabla\nabla v - \partial_b v \otimes \partial_b v - \bar{\partial}_b v \otimes \bar{\partial}_b v + \frac{1}{2}\left| \bar{\partial}_b v \right|^2 L_{\theta},
\end{equation}
where \(\nabla\) denotes the Tanaka--Webster connection of \((M,\theta)\).
If \(F\) is a local CR diffeomorphism, then the CR Schwarzian of \(F\) is given by \cite{son2018}
\begin{equation}\label{e:crschwarzian}
\mathcal{S}_{\theta}(F) = B_{\theta} (\log \varphi),
\end{equation}
where
\begin{equation}\label{e:crschwarzian2}
B(v) = B_{\theta}(v) = 2 \mathcal{H}_{\theta}(v) - \frac{1}{n}\left(\Delta_b v - 2n |\bar{\partial}_b v|^2\right) L_{\theta}.
\end{equation}
In fact, the formula for \(B\) is simpler. Specifically, we have in a local frame of \(M\),
\[
B(v)_{\alpha\bar{\beta}} = v_{\alpha,\bar{\beta}} + v_{\bar{\beta},\alpha} - \frac{1}{n} \left(v_{\gamma,}{}^{\gamma}+ v_{\bar{\gamma},}{}^{\bar{\gamma}}\right) h_{\alpha\bar{\beta}},
\]
and
\[
B(v)_{\alpha\beta} = 2v_{\alpha,\beta} - 4 v_{\alpha} v_{\beta}, \quad B(v)_{\bar{\alpha}\bar{\beta}} = \overline{B(v)_{\alpha\beta}}.
\]
Here, an indices preceded by a comma indicates the covariant derivative with respect to the Tanaka--Webster connection. Lower and raising indices are done by the Levi matrices; see \cite{son2018} and \cite{lamel2021cr} for more details.

We should point out that the formula for \(B(v)_{\alpha\bba}\) appears a long time ago in related and seemingly different situations \cite{lee1988pseudo}. It is well-known that \(B(v)_{\alpha\bba} = 0\) for a real-valued function \(v\) if and only if \(v\) is a CR pluriharmonic function (that is, \(v\) is locally the real-part of a CR function).

The CR Ahlfors derivative is a generalization of the CR Schwarzian derivative for CR immersions, that is, for the CR maps into higher dimensional target. Its definition is a bit more complicated: It involves terms related to the CR second fundamental form of the immersion \(F\). Since we only need a formula for the CR Ahlfors derivative in the special case when the hypersurfaces are defined by an approximate Feffermann defining function (which is the case of hyperquadrics and Winkelmann hypersurface), we shall not reproduce the geometric construction of the CR Ahlfors derivative here. We refer the reader to \cite{lamel2021cr} for details. 

An important property of the CR Schwarzian and Ahlfors derivative is the ``chain rule'', which is described as follows:
\[
\mathcal{A}(G \circ F) = \mathcal{A}(F) + F^{\ast} \mathcal{A}(G),
\]
holds for a chain of CR immersions \(F\) and \(G\) and its compositions. This is \cite[Theorem 1.2]{lamel2021cr}. An interesting situation is when \(F\) or \(G\) has vanishing CR Ahlfors derivative, e.g., when \(F\) and \(G\) are CR automorphisms of a sphere or a hyperquadric. Specifically, if \(\gamma\) is a CR automorphism of the target sphere or hyperquadric (equipped with the usual pseudo-Hermitian structure), then
\[
\mathcal{A}(\gamma\circ F) = \mathcal{A}(F).
\]
Thus, the CR Ahlfors derivative provides an invariant for spherically equivalent sphere or hyperquadric CR maps. Moreover, if \(\phi\) is a CR automorphism of the source, then
\[
\mathcal{A}(F\circ \phi) = \phi^{\ast} \mathcal{A}(F) = e^{u\circ F} \mathcal{A}(F)
\]
for some smooth function \(u\). Hence, since \(e^{u\circ F}\) is non-vanishing, the rank of \(\mathcal{A}(F)_{\alpha\bba}\) is invariant with respect to the pre-composition with a CR automorphism of the source. We therefore conclude that the rank of the Hermitian part of the CR Ahlfors derivative is invariant with respect to compositions with CR automorphisms both of the source and target. One of the key facts in our paper is that this invariant property of the CR Ahlfors tensor also holds for the case of Winkelmann hypersurfaces of arbitrary dimensions and signatures; see Section~4.

\section{CR maps between hyperquadrics}
In this section, we collect several results on the CR Ahlfors derivative of CR maps between hyperquadrics. More specifically, we present a relation between the CR Ahlfors derivative and the geometric rank of the map. This relation is essentially the same as for the case of spheres, which was observed earlier by Lamel--Son. Based on this relation, we can restate a recent theorem of Huang--Lu--Tang--Xiao \cite{huang2020boundary} for the CR transversal maps between hyperquadrics with vanishing geometric rank. 

For the sake of completeness, we shall present further details. By the same construction as in \cite{lamel2021cr}, we can define the CR Ahlfors derivative for an arbitrary CR transversal map between Levi-nondegenerate CR manifolds. 
\begin{lemma}
Let \(\mathbb{H}_{\ell}^{2n+1}\) be a hyperquadric of signature \(\ell \geq 0\) in \(\mathbb{C}^{n+1}\) and let \(\gamma\) be a CR automorphism of \(\mathbb{H}_{\ell}^{2n+1}\). Then its CR Schwarzian derivative with respect to the standard pseudo-Hermitian structure \({\Theta}\) vanishes identically: \[ \mathcal{S}_{\Theta}(\gamma) = 0. \]
\end{lemma}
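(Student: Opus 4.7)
By definition, $\mathcal{S}_\Theta(\gamma) = B_\Theta(\log\varphi)$ where $\gamma^{\ast}\Theta = \varphi\,\Theta$ with $\varphi$ a nowhere-vanishing smooth function on $\mathbb{H}_{\ell}^{2n+1}$. The goal is to verify that $v := \log|\varphi|$ satisfies
\[
B(v)_{\alpha\bar\beta}=0 \qquad\text{and}\qquad B(v)_{\alpha\beta}=0
\]
at every point. My plan is to combine the explicit description of the CR automorphism group of $\mathbb{H}_\ell^{2n+1}$ with the chain-rule property of the CR Schwarzian already recorded in the paper.

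The first step is to recall that the CR automorphism group of $\mathbb{H}_\ell^{2n+1}$ equipped with $\Theta = i\bar\partial\rho_{\mathbb{H}_\ell^{2n+1}}$ is generated by four families of maps: the (pseudo-)Heisenberg translations $(z,w)\mapsto (z+a,\, w+b+2i\sum_k \epsilon_k \bar a_k z_k)$ with $b\in\mathbb{R}$; the nonisotropic dilations $(z,w)\mapsto(\lambda z,\lambda^2 w)$ with $\lambda>0$; the pseudo-unitary rotations $(z,w)\mapsto (Uz,w)$ preserving the indefinite form $\sum_k \epsilon_k |z_k|^2$; and the Cayley-type inversion $(z,w)\mapsto(z/w,\,-1/w)$. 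For the first three families, the direct computation of $\gamma^{\ast}\Theta$ gives $\varphi\equiv 1$ (for translations and rotations) or $\varphi\equiv\lambda^2$ (for dilations); in each case $v=\log\varphi$ is constant, so every covariant derivative of $v$ vanishes and both $B(v)_{\alpha\bar\beta}$ and $B(v)_{\alpha\beta}$ are identically zero from the explicit formulas in~\eqref{e:crschwarzian2}.

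The only genuine computation is for the inversion. Here I would compute $\rho_{\mathbb{H}_\ell^{2n+1}}\circ\gamma$ as a function times $\rho_{\mathbb{H}_\ell^{2n+1}}$, extract the conformal factor $\varphi$ on the hyperquadric, and check that $v=\log\varphi$, which turns out to be (up to a sign) a real part of the logarithm of a CR function, satisfies $B(v)=0$. For the Hermitian component this reduces to the fact that $v$ is CR pluriharmonic (which, as the excerpt notes, is exactly the condition $B(v)_{\alpha\bar\beta}=0$). For the $(2,0)$-component $B(v)_{\alpha\beta}=2v_{\alpha,\beta}-4v_\alpha v_\beta$, a short Riccati-type verification shows that the quadratic term $-4v_\alpha v_\beta$ exactly cancels the nonlinear contribution coming from the second covariant derivative, using that $\Theta$ has vanishing pseudo-Hermitian torsion and curvature.

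Finally, to propagate vanishing to arbitrary products of generators, I apply the chain-rule stated just before this lemma: $\mathcal{S}(\gamma_1\circ\gamma_2)=\mathcal{S}(\gamma_2)+\gamma_2^{\ast}\mathcal{S}(\gamma_1)$. If both summands vanish, so does the left-hand side, and by induction the Schwarzian vanishes on the whole automorphism group. The one step I expect to be genuinely technical is the Cayley inversion computation; a more conceptual alternative (which I would add as a remark) is to argue that the Tanaka--Webster torsion and Webster--Ricci tensor of $\Theta$ vanish, and so do those of $\gamma^{\ast}\Theta$ by naturality; then the Jerison--Lee conformal-change formulas, whose obstruction terms are precisely the components of $B(\log\varphi)$, force $B(\log\varphi)=0$ without ever invoking generators.
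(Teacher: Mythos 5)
Your proposal is correct and is essentially the paper's own proof: the authors merely assert that the vanishing follows from the explicit form of the automorphisms in \cite{chern1974} by direct calculation and leave the details to the reader, and your organization of that calculation --- constant conformal factor for translations, rotations and dilations, the Cayley inversion as the only nontrivial case, and propagation via the cocycle identity $\mathcal{S}(\gamma_1\circ\gamma_2)=\mathcal{S}(\gamma_2)+\gamma_2^{\ast}\mathcal{S}(\gamma_1)$ --- is exactly how one would fill those details in. One caveat for your ``Riccati-type verification'': for the inversion $\varphi=|w|^{-2}$, and in the flat frame one finds $v_{\alpha,\beta}=v_{\alpha}v_{\beta}$ for $v=\log\varphi$, so that literally $B(v)_{\alpha\beta}=2v_{\alpha,\beta}-4v_{\alpha}v_{\beta}=-2v_{\alpha}v_{\beta}\neq 0$; the cancellation you describe occurs for $u=\tfrac{1}{2}\log\varphi$ (i.e.\ the normalization $\gamma^{\ast}\Theta=e^{2u}\Theta$ of \cite{son2018}, which also matches the M\"obius equation $u_{\alpha,\beta}=2u_{\alpha}u_{\beta}$ used later in the paper), a factor that is immaterial for the Hermitian part (linear in $u$, and all that the rest of the paper uses) but must be sorted out for the $(2,0)$-part of this lemma to come out to zero.
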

\begin{proof}
Observe that a CR automorphism of \(\mathbb{H}_{\ell}^{2n+1}\) is CR transversal and hence its Schwarzian derivative is well-defined. The vanishing of the CR Schwarzian follows directly from the explicit formula for the automorphisms (as provided in \cite{chern1974}) and direct calculations. We leave the details to the readers.
\end{proof}
Suppose that $F \colon U \to \mathbb{C}^{N+1}$ is a holomorphic map, with $F(\mathbb{H}_{\ell}^{2n+1}) \subset \mathbb{H}_{\ell^{\prime}}^{2N+1}$ transversally. Then there is a real-valued function $Q$ defined on a neighborhood of $0$, such that the following mapping equation holds
\begin{equation}\label{e:mapping}
\frac{G - \bar{G}}{2i} - \sum_{k=1}^{N} \epsilon_k|F^{k}|^2 = Q(z,w,\zba,\wba) \left(\frac{w-\bar{w}}{2i} - |z|^2_{\ell}\right),
\end{equation}
where \(Q\) does not vanish along \(\mathbb{H}_{\ell}^{2n+1}\). As proved by Lamel--Son \cite{lamel2021cr} in the strictly pseudoconvex case it holds that 
\begin{equation}\label{e:ahlfors}
\mathcal{A}(F)(Z_{\alpha}, Z_{\bba}) = \left\langle i \bar{\partial} \partial \log (Q) , Z_{\alpha} \wedge Z_{\bba} \right \rangle,
\end{equation}
provided that \(Q>0\) (that is, \(F\) preserves sides).

If \(Q<0\), we say that \(F\) reverses sides and, in the formula above, we can replace \(Q\) by \(-Q\); details are left to the readers.

We point out that \eqref{e:ahlfors} is a special case of a general formula which is valid for arbitrary defining functions of the hypersurfaces. When we choose a Feffermann defining function, which is the case of the spheres and hyperquadrics with ``usual'' defining function, the formula for the CR Ahlfors tensor greatly simplifies. On the other hand, \eqref{e:ahlfors} is very useful for computing the CR Ahlfors tensor, especially when \(Q\) can be computed explicitly. In many cases, finding the quotient \(Q\), which amounts to factorizing the left-hand side of \eqref{e:mapping} in the polynomial ring or the ring of power series, is very simple. However, in some situations, we may wish to avoid a division in the power series ring. In these situations, we can use the following result.
\begin{proposition} If \(H=(F^1, \dots, F^{N},G)\) is a holomorphic map that sends a hyperquadric in \(\mathbb{H}^{2n+1}_{\ell} \subset \mathbb{C}^{n+1}\) into \(\mathbb{H}^{2N+1}_{\ell^{\prime}} \subset \mathbb{C}^{N+1}\) transversally. There exists a non-vanishing real-valued function \(Q\) in a neighborhood of \(\mathbb{H}^{2n+1}_{\ell}\) such that
\begin{equation}
\rho_{\mathbb{H}^{2N+1}_{\ell^{\prime}}} \circ H = Q\, \rho_{\mathbb{H}^{2n+1}_{\ell}}
\end{equation}
Suppose that \(Q>0\) and \(\mathcal{A}(H)\) is the CR Ahlfors derivative of \(H\) with respect to the standard pseudo-Hermitian structures of the sources and target. Then, in the holomorphic frame \(\{Z_{\alpha}\}\), we have
\begin{align}\label{e:alhfors1}
\mathcal{A}_{\alpha\bba} (H) & = -\frac{2i}{Q}\sum_{k=1}^{N} \epsilon_k (Z_{\alpha}F^k _w) (Z_{\bba}\overline{F}^k) + \frac{4}{Q^2} \sum_{j,k=1}^{N} \epsilon_j\epsilon_kF^k_w (Z_{\bba} \overline{F}^k) \overline{F}^j_{\wba} (Z_{\alpha} F^j) \notag \\
& \qquad + \left( iG_{ww}+2\sum_{k=1}^{N} \epsilon_k \left( \overline{F}^k F^k_{ww}+ 2 |F^k_w|^2\right) \right) h_{\alpha\bba}.
\end{align}
\end{proposition}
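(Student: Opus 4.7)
The plan is to derive \eqref{e:alhfors1} from the compact characterization \eqref{e:ahlfors}, namely $\mathcal{A}_{\alpha\bba}(H) = \langle i\bar\partial\partial \log Q, Z_\alpha \wedge Z_{\bba}\rangle$, by computing $\partial\bar\partial \log Q$ implicitly from the mapping equation rather than by inverting $\rho$. On the open set $\{\rho > 0\}$ where both $\rho$ and $\rho'\circ H$ are positive, we have the additive decomposition $\log Q = \log(\rho' \circ H) - \log \rho$, and the standard identity $\partial\bar\partial \log f = f^{-1}\partial\bar\partial f - f^{-2}\partial f \wedge \bar\partial f$ reduces the problem to manipulating $\partial$, $\bar\partial$ and $\partial\bar\partial$ of $\rho$ and of $\rho'\circ H$ separately. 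Since $Q$ is smooth and nonvanishing across $\{\rho=0\}$, the resulting expression extends continuously to all of $U$.

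The crucial simplification is that the standard frame $Z_\alpha = \partial_{z_\alpha} + 2i\epsilon_\alpha \bar z_\alpha \partial_w$ satisfies $Z_\alpha \rho = 0$ throughout $\C^{n+1}$ (not merely along $\{\rho = 0\}$). Combined with $\rho' \circ H = Q\rho$, this gives $Z_\alpha(\rho' \circ H) = (Z_\alpha Q)\rho$, so on the frame the ``quadratic'' piece $\partial f \wedge \bar\partial f / f^2$ for $f = \rho'\circ H$ contributes the bounded term $(Z_\alpha Q)(Z_{\bba}Q)/Q^2$, while the corresponding piece for $f = \rho$ vanishes. Using the pluriharmonicity of $G,\bar G, z_k, \bar z_k$, one obtains $\partial\bar\partial(\rho'\circ H) = -\sum_k \epsilon_k dF^k \wedge d\bar F^k$ and $\partial\bar\partial \rho = -\sum_k \epsilon_k dz_k \wedge d\bar z_k$, which on the frame evaluate to $-\sum_k \epsilon_k(Z_\alpha F^k)(Z_{\bba}\bar F^k)$ and $-\epsilon_\alpha \delta_{\alpha\beta}$ respectively. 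Combining these yields an intermediate expression of the shape $\mathcal{A}_{\alpha\bba}(H) = \rho^{-1}\Phi + (\text{bounded})$, where $\Phi$ vanishes along $\{\rho=0\}$ (a fact one also sees directly by applying $Z_{\bba}Z_\alpha$ to the mapping equation).

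The main task is then to extract the first-order $\rho$-Taylor coefficient of $\Phi$, which, because $\rho_w = -i/2$, equals $i(\partial_w - \partial_{\bar w})\Phi$ evaluated at $\rho = 0$. Differentiating $\rho'\circ H = Q\rho$ in $\partial_w$ once and twice yields the on-hypersurface formulas $Q = G_w - 2i\sum_k \epsilon_k F^k_w \bar F^k$ and $Q_w = i(\rho'\circ H)_{ww}$, and the commutation $\partial_w(Z_\alpha F^k) = Z_\alpha F^k_w$ together with its antiholomorphic counterpart produces the coefficient $-2i/Q$ of $\sum_k \epsilon_k(Z_\alpha F^k_w)(Z_{\bba}\bar F^k)$ and the quadratic piece with coefficient $4/Q^2$. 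The main obstacle is the bookkeeping that collapses the $h_{\alpha\bba}$-coefficient to $iG_{ww} + 2\sum_k \epsilon_k(\bar F^k F^k_{ww} + 2|F^k_w|^2)$: its four summands arise respectively from $Q_w$ (contributing $iG_{ww}$ and $2\sum \epsilon_k\bar F^k F^k_{ww}$), from the reductions of $\epsilon_\alpha \delta_{\alpha\beta}$ to $h_{\alpha\bba}/2$, and from the mixed derivative $\partial_w\partial_{\bar w}(\rho'\circ H) = -\sum_k \epsilon_k F^k_w \bar F^k_{\bar w}$ which supplies the $4|F^k_w|^2$ term, after which everything assembles into \eqref{e:alhfors1}.
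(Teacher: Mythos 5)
Your strategy is sound and, at bottom, runs parallel to the paper's: both proofs reduce to differentiating the mapping equation $\rho'\circ H=Q\rho$ twice and evaluating on the hyperquadric. The difference is organizational. The paper restricts to the hypersurface at the outset: it applies $\xi=2i\partial_w$ and $\bar\xi$ to the mapping equation to get $Q|_{\mathbb{H}}=G_w-2i\sum_k\epsilon_k\overline{F}^kF^k_w$ and $(\xi+\bar\xi)Q$, then hits this restriction with the tangential fields $Z_\alpha,Z_{\bba}$ and feeds the result into the intrinsic formula $\mathcal{A}_{\alpha\bba}=\tfrac12\left(u_{\alpha,\bba}+u_{\bba,\alpha}+(\xi+\bar\xi)u\right)$ for $u=\log Q$. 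You instead stay in the ambient space, use the additive splitting $\log Q=\log(\rho'\circ H)-\log\rho$ together with the (correct, and nicely exploited) fact that $Z_\alpha\rho\equiv0$ for the standard frame, and recover the boundary value by extracting the first-order $\rho$-Taylor coefficient of the singular numerator via the transversal field $i(\partial_w-\partial_{\bar w})$. Your version trades the explicit $(\xi+\bar\xi)u$ correction of the intrinsic formula for a L'H\^opital-type limit; the identities you quote ($Q=G_w-2i\sum_k\epsilon_kF^k_w\overline{F}^k$ and $Q_w=i(\rho'\circ H)_{ww}$ on the hypersurface, $\partial_w Z_\alpha=Z_\alpha\partial_w$) are all correct and suffice in principle.

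The one place where the deferred ``bookkeeping'' conceals a genuine step is the collapse to the first displayed term of \eqref{e:alhfors1}. Your singular numerator is $\Phi=h_{\alpha\bba}-Q^{-1}\sum_k\epsilon_k(Z_\alpha F^k)(Z_{\bba}\overline{F}^k)$, and applying $i(\partial_w-\partial_{\bar w})$ to it produces \emph{two} families of second-order terms, $\sum_k\epsilon_k(Z_\alpha F^k_w)(Z_{\bba}\overline{F}^k)$ \emph{and} its mirror $\sum_k\epsilon_k(Z_\alpha F^k)(Z_{\bba}\overline{F}^k_{\wba})$, each with coefficient $\mp i/Q$; formula \eqref{e:alhfors1} contains only the first, with coefficient $-2i/Q$. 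These two expressions are not termwise equal, and identifying your (manifestly Hermitian) output with the stated one requires an additional on-hypersurface identity --- in effect the tangential derivatives of $Q|_{\mathbb{H}}=G_w-2i\sum_k\epsilon_k\overline{F}^kF^k_w$, i.e.\ $Z_{\bba}Q=-2i\sum_k\epsilon_kF^k_w Z_{\bba}\overline{F}^k$ and its conjugate, which is exactly the route the paper takes and which also supplies the $4|F^k_w|^2h_{\alpha\bba}$ contribution. You should either switch to differentiating that representative of $Q$ (as the paper does) or state and prove the reconciling identity; as written, ``everything assembles'' is asserted rather than shown. A smaller point: your ``reduction of $\epsilon_\alpha\delta_{\alpha\beta}$ to $h_{\alpha\bba}/2$'' is off by the convention $h_{\alpha\bba}=\epsilon_\alpha\delta_{\alpha\beta}$ used in the paper, so the constants in the $h_{\alpha\bba}$-coefficient need to be rechecked once the symmetrization issue is settled.
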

\begin{proof}
We consider the vector field 
\[
\xi = 2i \frac{\partial}{\partial w}.
\]
Applying \(\xi\) to both sides of the mapping equation yields

\begin{equation}\label{e:a1}
G_w - 2i\sum_{k=1}^{N} \epsilon_k \overline{F}^k F^k _w = \xi(Q) \left(\frac{w-\bar{w}}{2i} - |z|^2_{\ell}\right) + Q.
\end{equation}

Applying \(\overline{\xi} \) to both sides of the just obtained equation \eqref{e:a1} and restricting to \(\mathbb{H}^{2n+1}_{\ell}\), we obtain
\[
4 \sum_{k=1}^{N} \epsilon_k |F^k _w|^2 \biggl|_{\mathbb{H}_{\ell}^{2n+1}} 
=
\xi(Q) + \overline{\xi} (Q)\biggl|_{\mathbb{H}_{\ell}^{2n+1}} .
\]
From these two formulas, we can apply the result of Lamel--Son \eqref{e:ahlfors} to obtain the desired formula. Specifically, from \eqref{e:a1}, we have
\begin{equation}\label{e:Q}
Q\bigl|_{\mathbb{H}_{\ell}^{2n+1}} = G_w - 2i\sum_{k=1}^{N} \epsilon_k \overline{F}^k F^k _w\biggl|_{\mathbb{H}_{\ell}^{2n+1}}.
\end{equation}
Applying \(Z_{\bba}\), which is tangent to \({\mathbb{H}_{\ell}^{2n+1}}\), to both sides of \eqref{e:Q}, we have the following holds on \(\mathbb{H}^{2n+1}_{\ell}\):

\begin{equation}\label{e:a2}
Z_{\bba} Q = - 2i\sum_{k=1}^{N} \epsilon_k F^k _w Z_{\bba}\overline{F}^k .
\end{equation}
Applying \(Z_{\alpha}\) to \eqref{e:a2}, we have
\[
Z_{\alpha}Z_{\bba} Q = - 2i\sum_{k=1}^{N} \epsilon_k (Z_{\alpha}F^k _w) (Z_{\bba}\overline{F}^k) - 2i \sum_{k=1}^{N} \epsilon_k F^k _w Z_{\alpha}Z_{\bba}\overline{F}^k. 
\]
But
\[
Z_{\alpha}Z_{\bba}\overline{F}^k = 2i \epsilon_{\beta} \delta_{\alpha}^{\beta} \overline{F}^k_{\wba} = 2ih_{\alpha\bba} \overline{F}^k_{\wba},
\]
and therefore,
\[
Z_{\alpha}Z_{\bba} Q = - 2i\sum_{k=1}^{N} \epsilon_k (Z_{\alpha}F^k _w) (Z_{\bba}\overline{F}^k) + 4\sum_{k=1}^{N} \epsilon_k |F^k _w|^2 h_{\alpha\bba}.
\]
On the other hand, applying \(Z_{\alpha}\) to both sides of \eqref{e:Q}, we have
\begin{equation}
Z_{\alpha} Q = Z_{\alpha} G_w - 2i \sum_{k=1}^{N} \epsilon_k \overline{F}^{k}(Z_{\alpha} F^k_{w}). 
\end{equation}
Applying \(Z_{\bba}\), we obtain
\begin{equation}
Z_{\bba}Z_{\alpha} Q 
=
Z_{\bba} Z_{\alpha} G_{w} - 2i \sum_{k=1}^{N} \epsilon_k (Z_{\bba} \overline{F}^k)(Z_{\alpha} F^k_w) - 2i \sum_{k=1}^{N} \epsilon_k \overline{F}^k (Z_{\bba}Z_{\alpha} F^k_w).
\end{equation}
Simplifying the right-hand side, we have
\begin{equation}
Z_{\bba}Z_{\alpha} Q 
=
2i h_{\alpha\bba} \left(G_{ww}- 2i \sum_{k=1}^{N} \epsilon_k \overline{F}^k F^k_{ww} \right) - 2i \sum_{k=1}^{N} \epsilon_k (Z_{\bba} \overline{F}^k)(Z_{\alpha} F^k_w) .
\end{equation}
Put \(u = \log Q\). Differentiating along \(Z_{\alpha}\) and applying the formula for the CR Ahlfors tensor \cite[Eq. (6.14)]{lamel2021cr} (notice that the terms related to the Fefferman determinant vanish), we have,
\begin{align}
\mathcal{A}_{\alpha\bba} (F) & = \frac{1}{2} \left(u_{\alpha,\bba} + u_{\bba,\alpha} + (\xi + \overline{\xi} )u\right) \notag \\
& = 
\frac{1}{2Q} \left(Z_{\alpha}Z_{\bba} Q + Z_{\bba} Z_{\alpha} Q\right) - \frac{1}{Q^2}\left(Z_{\alpha} Q\right) \left(Z_{\bba} Q\right) + \frac{1}{2Q} \left(\xi Q + \bar{\xi Q}\right) \notag \\
& = -\frac{2i}{Q}\sum_{k=1}^{N} \epsilon_k (Z_{\alpha}F^k _w) (Z_{\bba}\overline{F}^k) + \frac{4}{Q^2} \sum_{j,k=1}^{N} \epsilon_j\epsilon_kF^k_w (Z_{\bba} \overline{F}^k) \overline{F}^j_{\wba} (Z_{\alpha} F^j) \notag \\
& \qquad + \left( iG_{ww}+2\sum_{k=1}^{N} \epsilon_k \left( \overline{F}^k F^k_{ww}+ 2 |F^k_w|^2\right) \right) h_{\alpha\bba},
\end{align}
holding along \({\mathbb{H}_{\ell}^{2n+1}}\). We complete the proof.
\end{proof}

The case \(Q<0\) is similar and left to the readers.

Formula \eqref{e:alhfors1} is useful when the map satisfies certain conditions. Specifically, if \(H\) maps the origin to the origin and satisfies several normalization conditions, then the CR Ahlfors tensor of \(F\) at the origin can be computed from the coefficients of the Taylor series expansions of \(F\) up to second orders. For instance, if we assume that

\begin{equation}\label{e:normal1}
\frac{\partial G}{\partial w}\biggl|_0 =1, \quad \frac{\partial F^k}{\partial w}\biggl|_0 = \frac{\partial^2 G}{\partial w^2}\biggl|_0 = 0,\quad k =1,2,\dots, N,
\end{equation}
then \eqref{e:alhfors1}, evaluated at the origin, gives
\begin{equation}\label{e:ahlfors2}
\mathcal{A}(H)_{\alpha\bba}\biggl|_0 = -2i \sum_{k=1}^{N} \epsilon_k \frac{\partial ^2 F^k}{\partial w\partial z_{\alpha}} \overline{ \frac{\partial F^k}{\partial {z}_{\beta}}}\biggl|_0.
\end{equation}
Hence, we obtain the following result, which was observed earlier by Lamel--Son for the case of spheres.
\begin{corollary} \label{cor:equality}
Let $H \colon U \subset \mathbb{H}^{2n+1}_{\ell} \to \mathbb{H}^{2N+1}_{\ell'}$ be a transversal CR map and \(p \in U\). Then
\[ \rk\left(\mathcal{A}_{\alpha\bar{\beta}}(H)|p\right) = \rk_H(p). \]	
\end{corollary}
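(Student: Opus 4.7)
The plan is to reduce the claim to a direct computation at the origin in a convenient normal form and then read off both ranks from the same matrix of mixed second derivatives.

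First I would exploit the fact that both sides of the asserted equality are invariant, at the level of rank, under pre-composing $H$ with a CR automorphism of $\mathbb{H}^{2n+1}_{\ell}$ and post-composing with one of $\mathbb{H}^{2N+1}_{\ell'}$. For the geometric rank this is built into its definition in \cite{huang2020boundary}; for the rank of the Hermitian Ahlfors tensor it is the content of the chain-rule discussion in Section~2, since the conformal factor $e^{u\circ H}$ is non-vanishing and a target automorphism has vanishing CR Ahlfors derivative. Fixing $p\in U$, we may therefore assume $p=0$, $H(0)=0$, and, by a further such reduction, the normalization conditions \eqref{e:normal1}; formula \eqref{e:ahlfors2} then yields
\[
\mathcal{A}_{\alpha\bar\beta}(H)\bigr|_{0}
= -2i\sum_{k=1}^{N}\epsilon_k\,\frac{\partial^{2} F^{k}}{\partial w\,\partial z_\alpha}\bigg|_{0}\,
\overline{\frac{\partial F^{k}}{\partial z_\beta}}\bigg|_{0}.
\]

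Next I would use the remaining freedom among target automorphisms fixing the origin to bring the linear part of $H$ into the Huang--Lu--Tang--Xiao normal form: after a suitable linear change on $\mathbb{C}^{N}$ compatible with the signature of $\mathbb{H}^{2N+1}_{\ell'}$, we have $\partial F^k/\partial z_\alpha\bigr|_{0}=\delta^{\alpha}_{k}$ for $1\le \alpha,k\le n$ (with the signature signs $\epsilon'_k$ matched to $\epsilon_\alpha$) and $\partial F^k/\partial z_\alpha\bigr|_0=0$ for $k>n$. Substituting collapses the sum into
\[
\mathcal{A}_{\alpha\bar\beta}(H)\bigr|_{0}
=-2i\,\epsilon_\beta\,\frac{\partial^{2} F^{\beta}}{\partial w\,\partial z_\alpha}\bigg|_{0},\qquad 1\le\alpha,\beta\le n.
\]
Since the diagonal factor $-2i\,\epsilon_\beta$ is non-vanishing, the rank of $[\mathcal{A}_{\alpha\bar\beta}(H)|_0]$ coincides with the rank of the $n\times n$ matrix $(\partial^{2}F^{\beta}/\partial w\,\partial z_\alpha)|_0$. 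Finally I would invoke the definition of the geometric rank from \cite{huang2020boundary}: in precisely this normal form, $\rk_H(0)$ is by construction the rank of that same matrix of mixed second derivatives (possibly after a diagonal conjugation by the signature signs). This gives the equality at $p=0$, and the initial reduction transfers it to an arbitrary $p\in U$.

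The main obstacle is conceptual rather than computational: one must carefully match the sign, ordering, and scaling conventions in our formula \eqref{e:ahlfors2} with those of \cite{huang2020boundary}, so that the ``linear isometry'' stage of the normalization truly kills the cross-terms in the sum and produces the defining matrix of $\rk_H(p)$. Once this dictionary is fixed the argument reduces to a short bookkeeping, and the side-reversing case $Q<0$ is handled identically with $-Q$ in place of $Q$.
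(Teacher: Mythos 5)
Your proposal is correct and follows essentially the same route as the paper: both reduce to the Huang/Huang--Lu--Tang--Xiao normal form via automorphisms of source and target (using the invariance of the rank of $\mathcal{A}_{\alpha\bar\beta}$ under such compositions), then evaluate formula \eqref{e:ahlfors2} in that normal form to identify the Hermitian Ahlfors tensor, up to nonzero diagonal factors, with the matrix $A(p)$ defining the geometric rank. The only cosmetic difference is that you perform the normalization in two stages (first \eqref{e:normal1}, then the linear part) rather than invoking the full normal form \eqref{e:normal} at once.
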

\begin{proof} As proved in \cite{huang2020boundary} for the case of \(\ell> 0\) and in \cite{Huang99} for the case of \(\ell = 0\), for each point \(p\), there are CR automorphisms \(\psi\) and \(\gamma\) such that \(\gamma(0) = p\) and \(\tilde{H}:=\psi^{-1} \circ H \circ \gamma\) has the following form: \(\tilde{H} = (F^{k})\), where
\begin{align}\label{e:normal}
F^k = 
\begin{cases}
z_\alpha + \frac{i}{2} a_{\alpha}(z) w + O_{wt}(4) \quad & k = \alpha \in \{1,2,\dots, n\} \\
\phi_l^{(2)}(z) + O_{wt}(3), & k = l \in \{n+1, \dots, N\}\\
w + O_{wt}(5) \quad  & k = N+1
\end{cases}
\end{align}
with 
\[
\langle \bar{z}, a(z) \rangle_{\ell} \|z\|^2_{\ell} = \|\phi^{(2)}(z)\|^2_{\tau}.
\]
Here \(\tau = \ell' - \ell \geq 0\) is the signature difference. (In the case \(\tau < 0\), we should consider side reversing maps, which is similar.) Moreover, \(a(z) = z A(p)\) for some Hermitian matrix \(A\). The rank of \(A\) is then an invariant of the spherically equivalent class of the map, which is called the \textit{geometric rank} of \(H\) at \(p\). In this form, \(\tilde{H}\) satisfies the normalization conditions given in \eqref{e:normal1}, with \(G = F^{N+1}\). Hence, plugging the form of \(\tilde{H}\) from \eqref{e:normal} into \eqref{e:ahlfors2}, we easily obtain
\[
\mathcal{A}(\tilde{H})_{\alpha\bba} = \frac{\partial a_{\beta}(z)}{\partial z_{\alpha}}.
\]
The proof is complete.
\end{proof}
Using Corollary \ref{cor:equality}, we can state a version of Huang--Lu--Tang--Xiao theorem \cite{huang2020boundary} for the CR Ahlfors tensor, which will be crucial for the next section.
\begin{corollary}\label{cor33}
Let \(U\) be a connected open neighborhood of a point \(p \in \heisn_{\ell}\) and \(H \colon U \to \C^N\) such that \(H(U\cap \heisn_{\ell}) \subset \heisN_{\ell'}\). Then the following are equivalent
\begin{enumerate}
\item \(H\) is CR transversal at \(p\) and \(\mathcal{A}(H) = 0\) in a neighborhood of~\(p\).
\item \(H\) is CR transversal at \(p\) and the Hermitian part of the CR Ahlfors derivative vanishes, \(\mathcal{A}(H)_{\alpha\bba} = 0\), in a neighborhood of~\(p\).
\item There exists \(\tau \in \mathrm{Aut}(\heisN)\) and \(\gamma \in \mathrm{Aut}(\heisn)\) such that 
	\begin{equation}\label{e:iso}
		\tau \circ H \circ \gamma^{-1} = (z,\phi,\psi,w)
	\end{equation}
where \(\phi\) and \(\psi\) are holomorphic maps near 0 with \(\ell'- \ell\) and \(N-n-\ell' + \ell\) components, respectively, satisfying \(\|\phi\| = \|\psi\|\).
\end{enumerate}
\end{corollary}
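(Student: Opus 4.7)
The strategy is to establish the chain $(1)\Rightarrow(2)\Rightarrow(3)\Rightarrow(1)$. The implication $(1)\Rightarrow(2)$ is immediate: the Hermitian part $\mathcal{A}(H)_{\alpha\bba}$ is a component of the full CR Ahlfors tensor.

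For $(2)\Rightarrow(3)$, I would apply Corollary~\ref{cor:equality}, which identifies $\rk\bigl(\mathcal{A}(H)_{\alpha\bba}|_q\bigr)$ with the geometric rank $\rk_H(q)$. The assumed vanishing of $\mathcal{A}(H)_{\alpha\bba}$ on a neighborhood of $p$ therefore forces $\rk_H(p)=0$. The theorem of Huang--Lu--Tang--Xiao \cite{huang2020boundary}, together with Huang's earlier result \cite{Huang99} handling the strictly pseudoconvex case $\ell=0$, then produces automorphisms $\tau\in\Aut(\heisN)$ and $\gamma\in\Aut(\heisn)$ bringing $H$ into the normal form $(z,\phi,\psi,w)$ with $\|\phi\|=\|\psi\|$.

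The substance of the argument lies in $(3)\Rightarrow(1)$. Set $\tilde H:=\tau\circ H\circ\gamma^{-1}=(z,\phi,\psi,w)$, and reindex the target coordinates so that the $\phi$-components occupy the negative positions of the target beyond those of $z_1,\dots,z_\ell$ and the $\psi$-components occupy the positive positions beyond $z_{\ell+1},\dots,z_n$. The mapping equation then takes the simple form
\[
\rho_{\heisN_{\ell'}}\circ\tilde H \;=\; \rho_{\heisn_\ell} \;+\; \bigl(\|\phi\|^2-\|\psi\|^2\bigr),
\]
so the isometry constraint $\|\phi\|=\|\psi\|$ forces the transverse factor $Q$ to equal $1$ identically in a full ambient neighborhood of $\heisn_\ell$, not merely along the hypersurface. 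Consequently $\log Q\equiv 0$ and all of its derivatives vanish, so the Lamel--Son formulas for the CR Ahlfors derivative in terms of $\log Q$---of which \eqref{e:ahlfors} records the Hermitian part, with the $(2,0)$-components admitting analogous expressions in the Fefferman-defining-function setting of \cite{lamel2021cr}---yield $\mathcal{A}(\tilde H)=0$ on a neighborhood of $\heisn_\ell$. Applying the chain rule $\mathcal{A}(G\circ F)=\mathcal{A}(F)+F^{\ast}\mathcal{A}(G)$ twice and invoking the preceding lemma to kill $\mathcal{A}(\tau)$ and $\mathcal{A}(\gamma^{-1})$ then gives $\mathcal{A}(H)=0$ on a neighborhood of~$p$.

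The main technical hurdle I anticipate is justifying the vanishing of the non-Hermitian components of $\mathcal{A}(\tilde H)$, since \eqref{e:ahlfors} as recorded here addresses only the Hermitian part. Confirming that the $(2,0)$-components in the Lamel--Son construction are likewise polynomial expressions in the derivatives of $\log Q$, and hence vanish identically when $Q\equiv 1$, is the remaining bookkeeping.
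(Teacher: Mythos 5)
Your proposal is correct and follows essentially the same route as the paper: $(1)\Rightarrow(2)$ is trivial, $(2)\Rightarrow(3)$ is Corollary~\ref{cor:equality} combined with the Huang--Lu--Tang--Xiao theorem, and $(3)\Rightarrow(1)$ comes from observing that the normal form forces $Q\equiv 1$ so the full Ahlfors tensor vanishes. Your extra care about the $(2,0)$-components in the last step is a reasonable point of bookkeeping the paper glosses over, but it is handled exactly as you suggest, since all components of $\mathcal{A}$ are polynomial in derivatives of $\log Q$ in the Fefferman-defining-function setting.
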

It is immediate that (3) implies (1). In fact, for the map of the form \eqref{e:iso}, the quotient \(Q\) is identically 1 and hence its CR Ahlfors derivative vanishes identically in a neighborhood of \(p\). That (1) implies (2) is trivial, and (2) implies (3) follows from the main result of Huang--Lu--Tang--Xiao \cite{huang2020boundary} together with \Cref{cor:equality}.

\section{Winkelmann hypersurfaces}
In this section, we discuss several basic properties of a Winkelmann hypersurface which are directly related
to the CR mapping problem. In particular, we deduce that the CR Ahlfors derivative is an invariant of the equivalent classes of CR maps from a hyperquadric into a Winkelmann hypersurfaces as well as of CR maps from a Winkelmann hypersurface into a hyperquadric of higher dimension,

For the Winkelmann hypersurface of signature $\ell'$ in \(\mathbb{C}^{n'+2}\), it's stability group at the origin is spanned by the following automorphisms. They are obtained by integrating the vector fields stated in Kruglikov \cite{kruglikov}, which vanish at $0$,
\begin{align*}
H(z,\zeta,w) & = (\lambda^2 u_1 z_1, \ldots, \lambda^2 u_{n'-1} z_{n'-1}, \lambda u_{n'} z_{n'}, \lambda^3 u_{n'} \zeta, \lambda^4 w), \\
S(z,\zeta, w) & = \left(z' + a z_{n'}, z_{n'}, \zeta + 2 i \langle
\bar a, z'\rangle_{\ell} + \left(r + i \| a \|^2_{\ell'}\right) z_{n'}, w\right),\\
R(z,\zeta, w) & = (U z', z_{n'}, \zeta, w),
\end{align*}
where $z=(z',z_{n'})=(z_1,\ldots, z_{{n'}-1},z_{n'}), \lambda > 0, |u_k|=1,  r \in \R, a\in \C^{{n'}-1}$, and $\sigma I_{{n'}-1,\ell'} = U I_{{n'}-1,\ell'} U^{\ast},\ \sigma \in \{-1,1\}$. The following proposition states properties of the CR automorphisms of the Winkelmann hypersurface.

\begin{proposition}\label{prop:41}
Let $\mathcal{W}_{\ell'}^{2{n'}+3}$ be a Winkelmann hypersurface and $\psi$ is a CR automorphism. Then the following holds:
\begin{enumerate}
\item \(\psi\) is a homothety of the pseudo-Hermitian structure \(\theta_{\mathcal{W}_{\ell'}^{2{n'}+3}} : = i \bar{\partial} \rho_{\mathcal{W}_{\ell'}^{2{n'}+3}}\), that is
\[ \psi^{\ast} \theta_{\mathcal{W}_{\ell}^{2{n'}+3}} = C \theta_{\mathcal{W}_{\ell'}^{2{n'}+3}},  \]
for some constant \(C\);
\item $\psi$ extends to an isometry of $\Omega$ with respect to the K\"ahler metric given by \eqref{e:winkelman-kahler}. 
\end{enumerate}
\end{proposition}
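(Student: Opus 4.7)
My plan is to reduce both parts to a single algebraic identity and verify it on generators. Since $\mathcal{W}_{\ell'}^{2n'+3}$ is real-analytic and Levi-nondegenerate, every CR automorphism $\psi$ extends holomorphically to a neighborhood of the hypersurface. I claim it suffices to show that there exists a positive constant $C = C(\psi)$ with
\[
\rho_{\mathcal{W}_{\ell'}^{2n'+3}}\circ\psi = C\, \rho_{\mathcal{W}_{\ell'}^{2n'+3}}
\]
as holomorphic functions in that neighborhood. Granting this identity, part (1) follows by applying $i\bar\partial$ and restricting to $\mathcal{W}_{\ell'}^{2n'+3}$; part (2) follows because $\log C$ is pluriharmonic, so $\psi^*\omega_{\mathcal{W}_{\ell'}^{2n'+3}} = -i\partial\bar\partial\log(C\rho_{\mathcal{W}_{\ell'}^{2n'+3}}) = \omega_{\mathcal{W}_{\ell'}^{2n'+3}}$.

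The identity is multiplicative under composition, so I verify it on generators of the full CR automorphism group. For the isotropy subgroup at the origin, generated by the three families $H$, $S$, $R$ displayed above the proposition, direct expansion gives $\rho\circ H = \lambda^4\rho$, $\rho\circ S = \rho$, and $\rho\circ R = \rho$, writing $\rho$ for $\rho_{\mathcal{W}_{\ell'}^{2n'+3}}$. The check for $S$ is the most delicate: the specific imaginary parts $2i\langle\bar a, z'\rangle$ and $i\|a\|^2 z_{n'}$ prescribed in the $\zeta$-shift are chosen precisely so that the extra contribution to $\Im(\bar z_{n'}\zeta)$ cancels both the cross-term $2\Re(\bar z_{n'}\langle\bar a, z'\rangle)$ and the term $\|a\|^2|z_{n'}|^2$ coming from expanding $\sum_k \epsilon_k'|z_k + a_k z_{n'}|^2$. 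For $R$ one uses unitarity of $U$ with respect to the indefinite Hermitian form.

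For the complementary generators, which move the origin and make $\mathrm{Aut}_{CR}(\mathcal{W}_{\ell'}^{2n'+3})$ act transitively, I work infinitesimally: for each holomorphic symmetry vector field $X$ in Kruglikov's classification I verify that $(X + \overline{X})\rho = \mu_X\, \rho$ for a real constant $\mu_X$, so that the corresponding real flow satisfies $\rho\circ\psi_t = e^{\mu_X t}\rho$. Sample calculations are $X = \partial_w$ and $X = \partial_\zeta + z_{n'}\partial_w$ (both giving $\mu = 0$, so the flows preserve $\rho$ exactly), while the scaling field $2\sum_{k<n'} z_k\partial_{z_k} + z_{n'}\partial_{z_{n'}} + 3\zeta\partial_\zeta + 4w\partial_w$ gives $\mu = 4$ and integrates to $H$. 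Writing any $\psi$ as a finite product of such one-parameter subgroups together with isotropy elements yields the claim with $C > 0$ the product of the corresponding positive constants.

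The principal obstacle is the tedious case-by-case verification that $\mu_X$ is a constant for every generator of the Kruglikov symmetry algebra; there is no obvious structural shortcut, and the computation depends crucially on the precise pairing of the quartic term $|z_{n'}|^4$ with the mixed term $\Im(\bar z_{n'}\zeta)$ in the defining function, which restricts the admissible symmetries of $\mathcal{W}_{\ell'}^{2n'+3}$ much more tightly than for a hyperquadric (ruling out inversion-type automorphisms that would otherwise produce a non-constant factor). Once the verifications are completed, the remaining steps are standard pluripotential manipulations.
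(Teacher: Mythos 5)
Your proposal is correct and follows essentially the same route as the paper, which simply verifies the homothety identity $\rho_{\mathcal{W}_{\ell'}^{2n'+3}}\circ\psi = C\,\rho_{\mathcal{W}_{\ell'}^{2n'+3}}$ on the explicit generators $H$, $S$, $R$ and leaves the computations to the reader; your reduction of both parts to that single identity, the cancellation analysis for $S$, and the infinitesimal treatment of the transitive generators (which the paper's sketch does not even mention) are all sound. The only cosmetic points are that the identity should be asserted for real-analytic (not holomorphic) functions, and that the listed generator $R$ satisfies $\rho\circ R=\rho$ only in the case $\sigma=1$ of the paper's parametrization.
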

\begin{proof}
We can check these properties directly for each of the maps \(H,S,R\) above. Details are left to the readers.
\end{proof}
Proposition~\ref{prop:41} implies that the CR automorphism group of the Winkelmann hypersurface is equal to the CR M\"obius group with respect to the usual pseudo-Hermitian structure: The CR automorphisms are homotheties. Here, a CR diffeomorphism of pseudo-Hermitian manifold \((M,\theta)\) is CR M\"obius if it preserves the traceless part of the pseudo-Hermitian Ricci tensor and the pseudo-Hermitian torsion tensor. Equivalently, it is CR M\"obius if its CR Schwarzian derivative vanishes; see \cite{son2018} for the details.

Part (i) in Proposition~\ref{prop:41} exhibits a special property of the ``standard'' pseudo-Hermitian structure \(\theta_{\mathcal{W}_{\ell'}^{2{n'}+3}}\), namely, the homothety group coincides with the CR automorphism group. We point out, however, that this coincidence is in general stronger than what are needed for the invariant property of the CR Ahlfors and Schwarzian tensor with respect to compositions with CR automorphisms. Precisely, for such an invariance property to hold, we need that the CR M\"obius group coincides with the CR automorphism group and the last condition is satisfied also for other choices of pseudo-Hermitian structures, the ones which are CR M\"obius with respect to \(\theta_{\mathcal{W}_{\ell'}^{2{n'}+3}}\). All such structures can be explicitly determined as in the proposition below. Indeed, one can easily check that \(\theta_{\mathcal{W}_{\ell'}^{2{n'}+3}}\) is pseudo-Einstein (the Webster Ricci tensor is a scalar multiple of the Levi metric) and has vanishing pseudo-Hermitian torsion. Thus, a pseudo-Hermitian structure \(\theta\) on \(\mathcal{W}_{\ell'}^{2{n'}+3}\) is CR M\"obius with respect to \(\theta_{\mathcal{W}_{\ell'}^{2{n'}+3}}\) if and only if \(\theta\) is pseudo-Einstein with vanishing pseudo-Hermitian torsion.

\begin{proposition} Let \(\theta = e^{u} \theta_{\mathcal{W}_{\ell'}^{2{n'}+3}}\) be a pseudo-Hermitian structure on \(\mathcal{W}_{\ell'}^{2{n'}+3}\).  Then \(\theta\) is pseudo-Einstein with vanishing pseudo-Hermitian torsion if and only if
\begin{equation}\label{eq:u}
u = \log \left|c_0 + \sum_{k=1}^{n'} c_k z_k + c_{{n'}+2} w \right|,
\end{equation}
for some constants \(c_0, c_1,\dots, c_{{n'}+2}\).
\end{proposition}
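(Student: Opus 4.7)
My plan is to combine the conformal transformation laws for the Webster Ricci tensor and the pseudo-Hermitian torsion (available from Lee \cite{lee1988pseudo}) with an explicit computation of the Tanaka--Webster connection of $\theta_0 := \theta_{\mathcal{W}_{\ell'}^{2n'+3}}$ in a suitable CR frame on $\mathcal{W}_{\ell'}^{2n'+3}$.

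First, since $\theta_0$ is pseudo-Einstein and the CR dimension $n'+1$ is at least $2$, Lee's theorem on pseudo-Einstein conformal changes implies that the pseudo-Einstein condition for $\theta = e^u\theta_0$ is equivalent to $B(u)_{\alpha\bba}=0$, i.e.\ to the CR pluriharmonicity of $u$. Hence $u = \Re F$ for some CR function $F$ on $\mathcal{W}_{\ell'}^{2n'+3}$ (modulo an additive real constant). Second, since the torsion of $\theta_0$ vanishes, the vanishing-torsion condition for $\theta$ translates via the Webster-torsion transformation formula into an equation of the schematic form $u_{\alpha,\beta} + 2\, u_\alpha u_\beta = 0$ (up to the precise normalization convention). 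Writing $u = \log|P| = \Re(\log P)$ with $P$ a nowhere-vanishing CR function (so $F = \log P$), the equation linearises to
\begin{equation*}
P_{\alpha,\beta} = 0 \quad\text{for all } \alpha,\beta,
\end{equation*}
where the covariant derivatives are with respect to the Tanaka--Webster connection of $\theta_0$.

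The classification of CR functions $P$ satisfying $P_{\alpha,\beta}=0$ is the core of the argument and is carried out in the standard CR frame of $T^{1,0}\mathcal{W}_{\ell'}^{2n'+3}$ obtained by projecting $\partial_{z_1},\ldots, \partial_{z_{n'}}, \partial_\zeta$ along the transversal direction $\partial_w$; this frame carries the Winkelmann-specific cross term $\bar z_{n'}\zeta$ in its $z_{n'}$- and $\zeta$-components. A direct computation of the Tanaka--Webster connection forms of $\theta_0$ in this frame shows that the connection is non-flat precisely because of the $|z_{n'}|^4$ and $\bar z_{n'}\zeta$ contributions to $\rho_{\mathcal{W}_{\ell'}^{2n'+3}}$. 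Expanding $P$ as a holomorphic function in the ambient variables $(z_1,\ldots, z_{n'},\zeta, w)$ and enforcing $P_{\alpha,\beta}=0$ term by term, the equations indexed by the $\zeta$-direction force the full $\zeta$-dependence of $P$ to vanish, while the remaining equations (analogously to the hyperquadric case) rule out all monomials of degree at least~$2$. Thus the most general solution is
\begin{equation*}
P = c_0 + \sum_{k=1}^{n'} c_k z_k + c_{n'+2}\, w,
\end{equation*}
giving the stated form of $u$.

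The converse direction is a direct verification in the same frame: for any $P$ of the prescribed linear form one checks $P_{\alpha,\beta}=0$, and $u = \Re(\log P) = \log|P|$ is manifestly CR pluriharmonic, so both the vanishing-torsion and the pseudo-Einstein conditions are satisfied by $\theta = e^u\theta_0$. The principal technical obstacle is the third step, specifically the explicit computation demonstrating that the cross term $\bar z_{n'}\zeta$ in $\rho_{\mathcal{W}_{\ell'}^{2n'+3}}$ is precisely what obstructs any $\zeta$-dependence of $P$. This feature distinguishes the Winkelmann classification from its hyperquadric counterpart, where all linear CR functions in the ambient coordinates appear on an equal footing.
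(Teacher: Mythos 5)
Your strategy coincides with the paper's: reduce to the M\"obius system $B(u)=0$ (Hermitian part giving CR pluriharmonicity, pure part giving the torsion condition), linearize by exponentiating a CR primitive of $u$, and then classify the CR functions $P$ with $P_{\alpha,\beta}=0$ using the explicitly computed Tanaka--Webster connection, whose only nonzero pure Christoffel symbol $\Gamma^{n'+1}_{n'n'}=-8i\bar z_{n'}$ is what eliminates the $\zeta$-dependence. Up to the sign convention you flag (the paper takes $G=e^{-u-iv}$ so that $G_{\alpha,\beta}=-GB(u)_{\alpha\beta}$), this is the same argument.

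The genuine gap is in the step you yourself defer as ``the principal technical obstacle'': the classification of $P$ with $P_{\alpha,\beta}=0$ cannot be done by simply ``expanding $P$ as a holomorphic function in the ambient variables and ruling out all monomials of degree at least $2$''. The answer contains a linear $w$-term, $w$ is not one of the CR directions of the frame, and on the hypersurface $w=t+i\varphi$ mixes with the Reeb direction; so the monomial-by-monomial elimination is not well posed as stated. The paper resolves this by an extra argument in the Reeb direction: using the vanishing torsion and the commutation identity $G_{0,\alpha}=G_{\alpha,0}+A_\alpha{}^{\bar\beta}G_{\bar\beta}$ together with the contracted Bianchi-type identity for $G_{0,\alpha}$, it shows that the Reeb derivative $G_0$ is \emph{constant}; one then subtracts $\tfrac12 G_0 w$ (after checking $w_{,\alpha\beta}=0$) to obtain a $t$-independent, hence genuinely holomorphic, function $K(z,\zeta)$ satisfying $K_{,\alpha\beta}=0$, to which your degree argument applies: the $(n',n')$-component reads $\partial_{z_{n'}}^2K+8i\bar z_{n'}\partial_\zeta K=0$, and applying $\partial_{\bar z_{n'}}$ (using that $K$ is holomorphic) forces $\partial_\zeta K=0$ and then linearity in $z$. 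Note also that it is this $(n',n')$-equation --- with the anti-holomorphic coefficient $\bar z_{n'}$ coming from $Z_{n'}h_{n'\bar n'}$, i.e.\ from the $|z_{n'}|^4$ term, combined with the off-diagonal Levi component produced by $\bar z_{n'}\zeta$ --- that kills the $\zeta$-dependence, not an ``equation indexed by the $\zeta$-direction''. Without the constancy of $G_0$ and the passage to $K$, your outline does not yet yield the stated form of $u$.
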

\begin{proof} It follows from \cite{son2018} that \(\theta\) is pseudo-Einstein with vanishing pseudo-Hermitian torsion if and only if the following equation satisfies
\[
B(u) = 0
\]
on \(\mathcal{W}_{\ell}^{2n+3}\). To solve this equation, we first consider the CR structure on \(\C^{n+1} \times \R\) given by the following vector fields
\[
Z_{\aba} = \frac{\partial}{\partial \bar{z}_{\alpha}} - i \frac{\partial \varphi}{\partial \bar{z}_{\alpha}} \frac{\partial}{\partial t}, \quad \alpha= 1,\dots, {n'}+1,
\]
where
\[
\varphi(z,\zeta) = \frac{z_{n'} \bar{\zeta} - \bar{z}_{n'} \zeta}{2i} + |z_{n'}|^4 + \sum_{k=1}^{{n'}-1} \epsilon_k |z_k|^2.
\]
This is a Levi-nondegenerate CR structure. Let 
\[
\theta = \frac{1}{2} \left(dt - i \varphi_{\alpha} dz_{\alpha} + i \varphi_{\aba}d \bar{z}_{\alpha}\right)
\]
be a pseudo-Hermitian structure. Then the Reeb field is
\[
T = 2 \frac{\partial}{\partial t},
\]
and the Levi-form is
\[
d\theta = i\varphi_{\alpha\bba} dz_{\alpha} \wedge d\zba_{\beta}.
\]
Thus,
\[
h_{\alpha\bba} = -id\theta (Z_{\alpha}, Z_{\bba}) = \varphi_{\alpha\bba}.
\]
We have
\[
[Z_{\bba}, Z_{\alpha}] = 2i \varphi_{\alpha\bba} \frac{\partial }{\partial t} = i \langle Z_{\alpha}, Z_{\bba} \rangle T.
\]
Therefore, the Christoffel symbols of mixed type in the chosen CR frame are
\[
\Gamma_{\aba\beta}^{\gamma} = 0.
\]
The Christoffel symbols of pure type are 
\[
\Gamma_{\beta\alpha}^{\gamma} = h^{\gamma\bar{\sigma}} Z_{\beta} h_{\alpha\bar{\sigma}}
=
\begin{cases}
-8i \bar{z}_n, \quad & (\alpha,\beta,\gamma) = ({n'},{n'},{n'}+1), \\
0 & \text{otherwise}.
\end{cases}
\]

The M\"{o}bius equation is equivalent to the condition that \(u\) is CR pluriharmonic and 
\[
u_{\alpha,\beta} = 2 u_{\alpha} u_{\beta}.
\]
Locally, we can find a real-valued function \(v\) such that \(u+iv\) is CR. We put \(G = e^{-u-iv}\). Then \(G\) is also CR. By a direct calculation, we have
\[
G_{\alpha,\beta} = -G B(u)_{\alpha\beta} = 0.
\]
On the other hand, since \(G\) is CR and \(\theta\) has vanishing pseudo-Hermitian torsion, we have
\[
G_{0,\bar{\alpha}} = G_{\bar{\alpha},0} = 0,
\]
hence
\[
G_{0,\alpha} = -\frac{i}{{n'}+1} \left(G_{\beta,}{}^{\beta}{}_{\alpha} - G_{\bar{\beta},}{}^{\bar{\beta}}{}_{\alpha}\right) = -\frac{1}{{n'}+1} G_{\alpha,0}.
\]
Together with the commutation relation
\[
G_{0,\alpha} = G_{\alpha,0} + A_{\alpha}{}^{\bar{\beta}} G_{\bar{\beta}},
\]
we assert that \(G_{0,\alpha} = 0.\) Hence, \(G_0\) must be a constant.
Consider the function
\[
K = G - \frac{1}{2}G_0 w,
\]
where \(w = t + i\varphi\). As \(G_0\) is a constant, by the chain rule, we can easily see that 
\[
\frac{\partial K}{\partial t} = 0.
\]
That is, \(K\) is a holomorphic function of \(z_1,\dots, z_n, \zeta\). 

We can check directly that 
\[
w_{,\alpha\beta} = Z_{\beta}Z_{\alpha} w - \Gamma_{\beta\alpha}^{\gamma} Z_{\gamma} w = 0.
\]
Therefore,
\[
K_{,\alpha\beta} = G_{,\alpha\beta} - \frac{1}{2} G_0 w_{,\alpha\beta} = 0.
\]
In particular, for \((\alpha,\beta) = (n,n)\), we have
\[
0 = K_{,{n'}{n'}} = \frac{\partial^2 K}{\partial z_{n'}^2} - \Gamma^{{n'}+1}_{{n'}{n'}} \frac{\partial K}{\partial \zeta} = \frac{\partial^2 K}{\partial z_{n'}^2} + 8i \bar{z}_{n'} \frac{\partial K}{\partial \zeta}
\]
Applying \(\partial/\partial \bar{z}_{n'}\), we deduce that 
\[
\frac{\partial K}{\partial \zeta} = 0,
\]
and hence
\[
K_{,{n'}{n'}} = \frac{\partial^2 K}{\partial z_{n'}^2} = 0.
\]
Thus, \(K\) is a holomorphic function of \(z_1,\dots, z_{n'}\) and satisfies
\[
\frac{\partial^2 K}{\partial z_{\alpha}z_{\beta}} = K_{,\alpha\beta} = 0.
\]
In other words, \(K\) is a linear function of \(z_1,\dots, z_{n'}\):
\[
K = \sum_{k=1}^{n'} c_k z_k.
\] 
Thus
\[
G = c_0 + c_{{n'}+2} w + \sum_{k=1}^{n'} c_k z_k, \quad c_{{n'}+2} = G_0/2,
\]
and
\[
u = \log \left|c_0 + \sum_{k=1}^{n'} c_k z_k+ c_{{n'}+2} (t + i \varphi(z,\zeta))\right|.
\]
The pseudo-Einstein structure with vanishing torsion is of the form
\[
\tilde{\theta} = e^u \theta,
\]
with \(u\) being of the form above.

Finally, on \(\mathcal{W}^{2{n'}+3}_{\ell'}\), the solution to the M\"{o}bius equation is given by
\[
u = \log \left|c_0 + \sum_{k=1}^{n'} c_k z_k + c_{{n'}+2} w \right|.
\]
This completes the proof.
\end{proof}

\begin{corollary} Let \(\hat{\theta} = e^{u} \theta_{\mathcal{W}_{\ell'}^{2{n'}+3}}\) be a pseudo-Hermitian structure on \(\mathcal{W}_{\ell'}^{2{n'}+3}\), where $u$ is given as in \eqref{eq:u}. Let \(\phi\) be a CR automorphism of \(\mathcal{W}_{\ell'}^{2{n'}+3}\). Then \(\phi\) is M\"obius with respect to \(\hat{\theta}\). 
\end{corollary}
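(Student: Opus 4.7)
The plan is to argue intrinsically and avoid any explicit computation with the conformal factor $u\circ\phi - u$.

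The first step uses the preceding proposition: since $u$ has the form \eqref{eq:u}, $\hat\theta$ itself is pseudo-Einstein with vanishing pseudo-Hermitian torsion. The second step transports these two conditions through $\phi$. Because $\phi$ is a CR diffeomorphism, it gives a pseudo-Hermitian isomorphism from $(\mathcal{W}_{\ell'}^{2n'+3},\phi^{\ast}\hat\theta)$ onto $(\mathcal{W}_{\ell'}^{2n'+3},\hat\theta)$; hence the Tanaka--Webster connection, Webster Ricci tensor, Levi form, and pseudo-Hermitian torsion of $\phi^{\ast}\hat\theta$ are the $\phi^{\ast}$-pullbacks of the corresponding invariants of $\hat\theta$. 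In particular, both the pseudo-Einstein property and the vanishing of the torsion pass to $\phi^{\ast}\hat\theta$.

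For the third step, write $\phi^{\ast}\hat\theta = e^v\hat\theta$ with $v$ smooth and real-valued. The equivalence recalled from \cite{son2018} in the proof of the preceding proposition states that, for \emph{any} pseudo-Einstein torsion-free base pseudo-Hermitian structure $\theta_0$, a conformal rescaling $e^v\theta_0$ remains pseudo-Einstein with vanishing torsion if and only if $B_{\theta_0}(v) = 0$. Applied with $\theta_0 := \hat\theta$, the previous step forces $B_{\hat\theta}(v) = 0$, which by \eqref{e:crschwarzian} means $\mathcal{S}_{\hat\theta}(\phi) = 0$. Hence $\phi$ is CR M\"obius with respect to $\hat\theta$ in the sense recalled in Section~4.

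The only slightly delicate point, which I expect to be the main obstacle, is the second step: one must state cleanly that the Tanaka--Webster data are natural under CR diffeomorphisms, so that pullback by $\phi$ preserves the pseudo-Einstein property and vanishing of torsion. The alternative route via the homothety $\phi^{\ast}\theta_{\mathcal{W}_{\ell'}^{2n'+3}} = C\,\theta_{\mathcal{W}_{\ell'}^{2n'+3}}$ from \Cref{prop:41}(i) would express $v$ as $\log C + u\circ\phi - u$, but verifying $B_{\hat\theta}(v) = 0$ directly from that formula is unpleasant because one would have to work out the M\"obius equation in the rescaled structure $\hat\theta$; the intrinsic route above avoids this entirely.
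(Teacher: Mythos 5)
Your argument is correct. The paper states this corollary without proof, so there is no official argument to compare against; your three-step derivation (the preceding proposition gives that $\hat\theta$ is pseudo-Einstein with vanishing torsion; naturality of the Tanaka--Webster invariants under the CR diffeomorphism $\phi$ transfers both properties to $\phi^{\ast}\hat\theta$; the characterization from \cite{son2018} applied with base $\hat\theta$ yields $B_{\hat\theta}(v)=0$, i.e.\ $\mathcal{S}_{\hat\theta}(\phi)=0$) is a legitimate way to fill the gap. The only point worth flagging is that the paper's proposition invokes the equivalence ``pseudo-Einstein with vanishing torsion $\Leftrightarrow$ $B(v)=0$'' only for the specific base $\theta_{\mathcal{W}_{\ell'}^{2n'+3}}$, whereas you need it for the rescaled base $\hat\theta$; this is indeed valid because the transformation laws for the traceless Webster Ricci tensor and the torsion in \cite{son2018} hold for an arbitrary pseudo-Einstein, torsion-free base, but you should cite that general form rather than the special case. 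An equivalent and slightly more economical route uses the cocycle property of the Schwarzian tensor together with Proposition~\ref{prop:41}(i): writing $\mathcal{B}(\theta_2,\theta_1)$ for the Schwarzian of the conformal change, one has
\[
\mathcal{S}_{\hat\theta}(\phi)=\phi^{\ast}\mathcal{B}(\hat\theta,\theta_{\mathcal{W}_{\ell'}^{2n'+3}})+\mathcal{B}(\phi^{\ast}\theta_{\mathcal{W}_{\ell'}^{2n'+3}},\theta_{\mathcal{W}_{\ell'}^{2n'+3}})-\mathcal{B}(\hat\theta,\theta_{\mathcal{W}_{\ell'}^{2n'+3}}),
\]
and all three terms vanish since $B(u)=0$ and the middle factor is the constant $\log C$; this avoids appealing to the pseudo-Einstein characterization for the rescaled base.
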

\begin{remark}\rm On a sphere or a hyperquadric, all pseudo-Hermitian structures which are M\"obius with respect to the standard structure \(\theta_{\mathbb{H}^{2n+1}_{\ell}}\) can be obtained by pulling back \(\theta_{\mathbb{H}^{2n+1}_{\ell}}\) via a suitable CR automorphism. In other words, \(\Aut(\mathbb{H}^{2n+1}_{\ell})\) acts on the set of pseudo-Hermitian structures via pulling back and the orbit of \(\theta_{\mathbb{H}^{2n+1}_{\ell}}\) coincides with the set of pseudo-Hermitian structures which are M\"obius with respect to \(\theta_{\mathbb{H}^{2n+1}_{\ell}}\).

On a Winkelmann hypersurface, the situation is a bit different: The orbit of the standard pseudo-Hermitian structure \(\theta_{\mathcal{W}_{\ell'}^{2{n'}+3}}\) under the action of \(\Aut(\mathcal{W}_{\ell'}^{2{n'}+3})\) is a \textit{proper} subset of the set of pseudo-Hermitian structures which are M\"obius with respect to \(\theta_{\mathcal{W}_{\ell'}^{2{n'}+3}}\).
\end{remark}
\begin{definition} Let \(H\) and \(\tilde{H}\) be two CR maps from \(\mathbb{H}_{\ell}^{2n+1}\) into
$\mathcal{W}^{2n'+3}_{\ell'}$. We say that \(H\) and \(\tilde{H}\) are equivalent if there exist CR automorphisms \(\phi \in \Aut(\mathcal{W}^{2n'+3}_{\ell'})\) and \(\gamma \in \Aut(\mathbb{H}_{\ell}^{2n+1})\) such that

\begin{equation}\label{equiv}
\tilde{H} = \phi \circ H \circ \gamma^{-1}.
\end{equation}

\end{definition}
In a similar way, we also define the equivalence for CR maps from a Winkelmann hypersurface into a hyperquadric. Details are left to the readers.

We conclude this section by the following invariant property of the CR Ahlfors tensor for CR maps from a hyperquadric into a Winkelmann hypersurface.
\begin{proposition}
If \(H\) and \(\tilde{H}\) are equivalent CR transversal maps from a hyperquadric into a Winkelmann hypersurface, then
\[ 
\mathcal{A}(H) = e^{u}\mathcal{A}(\tilde{H})
\]
where, the CR Ahlfors derivative are taken with respect to standard pseudo-Hermitian structures on \(\mathbb{H}_{\ell}^{2n+1}\) and \(\mathcal{W}^{2n'+3}_{\ell'}\), and \(u\) is determined by \(\gamma^{\ast} \theta = e^{u}\theta\). 
\end{proposition}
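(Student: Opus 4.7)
The plan is to apply the Lamel--Son chain rule for the CR Ahlfors derivative (recalled in Section~2) twice to the composition $\tilde H = \phi \circ H \circ \gamma^{-1}$, reducing everything to $\mathcal{A}(H)$ plus contributions from $\mathcal{A}(\phi)$ and $\mathcal{A}(\gamma^{-1})$, and then show that both of the latter vanish because $\phi$ and $\gamma^{-1}$ are CR automorphisms of their respective hypersurfaces.

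First, I apply $\mathcal{A}(G \circ F) = \mathcal{A}(F) + F^{\ast} \mathcal{A}(G)$ with $G = \phi$ and $F = H \circ \gamma^{-1}$ to obtain
\[ \mathcal{A}(\tilde H) = \mathcal{A}(H \circ \gamma^{-1}) + (H \circ \gamma^{-1})^{\ast} \mathcal{A}(\phi). \]
Since $\phi$ is a CR diffeomorphism of $\mathcal{W}_{\ell'}^{2n'+3}$, its Ahlfors derivative reduces to the CR Schwarzian $\mathcal{S}(\phi) = B(\log \varphi)$ with $\phi^{\ast} \theta_{\mathcal{W}} = \varphi\, \theta_{\mathcal{W}}$, where $\theta_{\mathcal{W}}$ denotes the standard pseudo-Hermitian structure on the Winkelmann hypersurface. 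By Proposition~\ref{prop:41}(i), $\phi$ is a homothety of $\theta_{\mathcal{W}}$, so $\varphi$ is a positive constant; the explicit formula for $B$ from Section~2 then forces $B(\log \varphi) = 0$, and hence $\mathcal{A}(\phi) = 0$.

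A second application of the chain rule, now with $G = H$ and $F = \gamma^{-1}$, gives
\[ \mathcal{A}(H \circ \gamma^{-1}) = \mathcal{A}(\gamma^{-1}) + (\gamma^{-1})^{\ast} \mathcal{A}(H). \]
Since $\gamma^{-1}$ is a CR automorphism of $\mathbb{H}^{2n+1}_{\ell}$, the first lemma of Section~3 yields $\mathcal{S}_{\theta}(\gamma^{-1}) = 0$, i.e.\ $\mathcal{A}(\gamma^{-1}) = 0$. Combining the two displays gives $\mathcal{A}(\tilde H) = (\gamma^{-1})^{\ast} \mathcal{A}(H)$.

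To conclude, I invoke the transformation rule for the Ahlfors tensor under pullback by a CR automorphism of the source, recalled at the end of Section~2: such a pullback amounts to multiplication by the scalar factor associated with the conformal change of $\theta$. Since $\gamma^{\ast} \theta = e^u \theta$, inverting the previous identity yields exactly $\mathcal{A}(H) = e^u \mathcal{A}(\tilde H)$, as claimed. The only conceptually nontrivial step is identifying this scalar factor correctly; everything else is a formal manipulation using the chain rule together with the vanishing of the CR Schwarzian on $\Aut(\mathcal{W}_{\ell'}^{2n'+3})$ (via Proposition~\ref{prop:41}) and on $\Aut(\mathbb{H}^{2n+1}_{\ell})$ (via the first lemma of Section~3).
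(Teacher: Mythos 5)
Your argument is correct and is precisely the one the paper intends: the proposition is stated there without proof, but your assembly of the Lamel--Son chain rule from Section~2 with the vanishing of the CR Schwarzian for hyperquadric automorphisms (the first lemma of Section~3) and the homothety property of Winkelmann automorphisms (Proposition~4.1(i), which forces \(\varphi\) to be constant and hence \(B(\log\varphi)=0\)) is exactly the mechanism the authors sketch at the end of Section~2 and announce for Section~4. The only caveats are cosmetic and shared with the paper's own formulation: one should take the homothety constant positive so that \(\log\varphi\) is defined (otherwise replace \(\varphi\) by \(-\varphi\)), and the final identity \(\mathcal{A}(H)=e^{u}\mathcal{A}(\tilde{H})\) is to be read as the statement that \(\mathcal{A}(H)=\gamma^{\ast}\mathcal{A}(\tilde{H})\) with the pullback contributing the conformal factor \(e^{u}\) in components, so that in particular the rank of the Hermitian part is preserved.
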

A similar statement holds for transversal CR immersions from a Winkelmann hypersurface into hyperquadric; details are left to the readers.
\section{CR maps from a hyperquadric into a Winkelmann hypersurface}

Based on \Cref{cor33}, we characterize extendability to a local isometry in terms of the vanishing of the Hermitian part of the CR Ahlfors tensor. This is possible due to the fact that the Winkelmann hypersurface can be embedded into a hyperquadric by a CR map with vanishing CR Ahlfors derivative.

\begin{theorem} Let \(U \subset \mathbb{H}^{2n+1}_{\ell}\) be an open subset and \(H\colon U \to\mathcal{W}_{\ell'}^{2n'+3}\). Let \(\mathcal{A}_{\alpha\bba}(H)\) be the Hermitian part of the CR Ahlfors tensor of \(H\) with respect to ``standard'' pseudo-Hermitian structures of the source and target. Then \(\mathcal{A}_{\alpha\bba}(H) = 0\) on \(U\) if and only if \(H\) extends to a local isometric embedding of the indefinite K\"ahler metrics.
\end{theorem}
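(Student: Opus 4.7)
My plan is to embed the Winkelmann hypersurface isometrically into a larger hyperquadric and then reduce the statement to Corollary~\ref{cor33}. Using the identity $\Im(z_{n'}\bar{\zeta}) = \tfrac{1}{2}(|\zeta-iz_{n'}|^2-|\zeta|^2-|z_{n'}|^2)$ together with $|z_{n'}|^4 = |z_{n'}^2|^2$, the defining function of $\mathcal{W}^{2n'+3}_{\ell'}$ can be rewritten as
\[
\rho_{\mathcal{W}^{2n'+3}_{\ell'}} = \Im(w) - \sum_{k=1}^{n'-1}\epsilon_k'|z_k|^2 - |z_{n'}^2|^2 - \tfrac{1}{2}|\zeta - iz_{n'}|^2 + \tfrac{1}{2}|z_{n'}|^2 + \tfrac{1}{2}|\zeta|^2,
\]
which presents $\rho_{\mathcal{W}}$ as $\rho_{\mathbb{H}^{2n'+7}_{\ell'+1}}\circ\iota$ for the polynomial CR embedding
\[
\iota(z,\zeta,w) = \bigl(z_1,\dots,z_{n'-1}, z_{n'}^2, \tfrac{1}{\sqrt 2}(\zeta - iz_{n'}), \tfrac{1}{\sqrt 2}z_{n'}, \tfrac{1}{\sqrt 2}\zeta, w\bigr),
\]
where the two extra negative signs in the target come from the $z_{n'}$ and $\zeta$ summands. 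Since the associated quotient is $Q\equiv 1$, the formula \eqref{e:ahlfors} gives $\mathcal{A}(\iota)\equiv 0$, and the holomorphic extension of $\iota$ is itself a local isometric embedding from $\Omega^{n'+2}_{\ell'}$ into $\mathcal{U}^{n'+4}_{\ell'+1}$ with respect to the canonical K\"ahler metrics.

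For the forward direction, assume $\mathcal{A}_{\alpha\bba}(H)\equiv 0$ on $U$. By the chain rule for the CR Ahlfors derivative,
\[
\mathcal{A}(\iota\circ H) = \mathcal{A}(H) + H^{\ast}\mathcal{A}(\iota) = \mathcal{A}(H),
\]
whose Hermitian part therefore also vanishes. As $\iota\circ H$ is a CR transversal map between hyperquadrics, Corollary~\ref{cor33} supplies automorphisms $\tau\in\Aut(\mathbb{H}^{2n'+7}_{\ell'+1})$ and $\gamma\in\Aut(\mathbb{H}^{2n+1}_{\ell})$ with $\tau\circ(\iota\circ H)\circ\gamma^{-1} = (z,\phi,\psi,w)$ and $\|\phi\|=\|\psi\|$. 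This normal form satisfies $\rho_{\mathbb{H}^{2n'+7}_{\ell'+1}}\circ F = \rho_{\mathbb{H}^{2n+1}_{\ell}}$ and hence extends to a local isometric embedding of the canonical K\"ahler metrics. Because automorphisms of hyperquadrics extend to isometries of these metrics (they multiply $\rho$ by a squared holomorphic modulus, which is a pluriharmonic perturbation of the K\"ahler potential), $\iota\circ H$ itself extends to a local isometric embedding; composing with the isometric $\iota$ from Step~1 yields $H^{\ast}\omega_{\mathcal{W}^{2n'+3}_{\ell'}} = \omega^{n+1}_{\ell}$.

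For the converse, suppose $H$ extends to a local isometric embedding of the K\"ahler metrics; then so does $\iota\circ H$, so that $\rho_{\mathbb{H}^{2n'+7}_{\ell'+1}}\circ(\iota\circ H) = |h|^2\,\rho_{\mathbb{H}^{2n+1}_{\ell}}$ for some non-vanishing holomorphic function $h$ on a neighborhood of $U$. Substituting $Q = |h|^2$ into \eqref{e:ahlfors} and using that $\log|h|^2$ is pluriharmonic yields $\mathcal{A}(\iota\circ H) \equiv 0$, and the chain rule then returns $\mathcal{A}(H)\equiv 0$. I expect the principal technical hurdle to be the signature bookkeeping in Step~1: one must verify that $\rho_{\mathcal{W}}$ is realized as an \emph{exact} pullback (with no conformal factor) of a standard hyperquadric defining function via $\iota$, because only then do $\mathcal{A}(\iota) = 0$ and the isometric-embedding property of $\iota$ hold at the same time. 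Once that is done, the remainder of the argument is a formal consequence of the chain rule and Corollary~\ref{cor33}.
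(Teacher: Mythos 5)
Your proposal is correct and follows essentially the same route as the paper: the paper also factors $H$ through an explicit polynomial CR embedding of $\mathcal{W}^{2n'+3}_{\ell'}$ into a hyperquadric that pulls back the defining function exactly (the paper uses $\Phi=(z_1,\dots,z_{n'-1},z_{n'}^2,\tfrac{z_{n'}+i\zeta}{2},\tfrac{z_{n'}-i\zeta}{2},w)$ into $\mathbb{H}^{2n'+5}_{\ell'+1}\subset\C^{n'+3}$, one dimension lower than your $\iota$), and then invokes the chain rule and Corollary~\ref{cor33} exactly as you do. The only cosmetic difference is your slightly less economical target and your more explicit treatment of the converse direction, which the paper leaves to the reader.
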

\begin{proof} We first observe that the Winkelmann hypersurface can be embedded into a hyperquadric by a CR embedding with vanishing CR Ahlfors tensor. Specifically, the quadratic holomorphic embedding $\Phi \colon \mathbb{C}^{n'+2} \to \mathbb{C}^{n'+3}$ defined by
\begin{align*}
\Phi(z_1,\ldots, z_{n'},\zeta,w) = \left(z_1, \ldots, z_{n'-1},z_n^2,\frac{z_{n'} + i \zeta}{2}, \frac{z_{n'} - i \zeta}{2}, w \right),\\
\end{align*}
sends $\mathcal{W}^{2n'+3}_{\ell'}$ into $\mathbb{H}_{\ell'+1}^{2n'+5} \subset \mathbb{C}^{n'+3}$, transversally. Moreover, since 
\[ 
\rho_{\mathbb{H}_{\ell'+1}^{2n'+5}} \circ \Phi = \rho_{\mathcal{W}^{2n'+3}_{\ell'}},
\]
we immediately see that \(\mathcal{A}(\Phi) = 0\) .

Let $\widetilde{H} = \Phi \circ H$. Then $\widetilde{H}$ is a smooth CR map sending $\mathbb{H}_{\ell}^{2n+1}$ into $\mathbb{H}^{2n'+5}_{\ell'+1}$. By the chain rule \cite{lamel2021cr},
\[
\mathcal{A}(\widetilde{H}) = \mathcal{A}(H) + H^{\ast} \mathcal{A}(\Phi) = 0.
\]
Hence, by Corollary \ref{cor33}, $\widetilde{H}$ extends to a local holomorphic isometric embedding of \(\mathcal{U}^{n+1}_{\ell}\) into \(\mathcal{U}^{N+3}_{\ell'+1}\). Together with the explicit form of $\Phi$, we can easily deduce that $H$ extends to a local holomorphic isometric embedding. This completes the proof.
\end{proof}

\section{Examples}
In this section, we give various examples of CR maps from a hyperquadric into a Winkelmann hypersurface. We point out that, similar to the hyperquadric case, there are examples of CR maps from a hyperquadric into a Winkelmann hypersurface whose geometric ranks vanish, but are not equivalent.
\begin{example}\rm We first observe that the map
\[
(z_1,\dots, z_{n},w) \mapsto (z_1,\dots, z_{n-1},\sqrt{z_n}, 0 ,w),
\]
has a singularity at the origin. Outside its singular locus, it sends \(\mathbb{H}^{2n+1}_{\ell}\) into \(\mathcal{W}^{2n+3}_{\ell+3}\). Since both the source and target are homogeneous, we can use their automorphism groups to obtain the map 
\begin{equation}\label{irrmap}
I(z_1,z_2, \dots, w) = \left(z_1,\dots, z_{n-1}, \sqrt{1+z_n} -1, 4i \left(\sqrt{1+z_n} -1 - z_n\right) w\right),
\end{equation}
which is holomorphic at the origin and sends \(\mathbb{H}^{2n+1}_{\ell}\) into \(\mathcal{W}^{2n+3}_{\ell+1}\). Details are left to the reader. This map is irrational in the standard coordinates. Moreover, it extends to a local holomorphic isometric embedding from \(\mathcal{U}^{n+1}_{\ell}\) into \(\Omega^{n+2}_{\ell+1}\).
\end{example}

\begin{example}\rm 
For \(\epsilon \in \{-1,1\}\), the quadratic map \(R\colon \C^{n+1} \to \C^{n+2}\)	defined by
\begin{align}\label{e:rat}
R & (z_1, z_2, \dots, z_n, w) \notag \\ & = \left((1 + \epsilon z_n)z_1, \dots, (1+ \epsilon z_n) z_{n-1}, z_n, w(\epsilon + z_n) - i z_n(1+2 \epsilon z_n), w(1 + \epsilon z_n)\right)
\end{align}
sends $\mathbb{H}^{2n+1}_{\ell}$ into $\mathcal{W}^{2n+3}_{\ell +1}$. Along \(\mathbb{H}^{2n+1}_{\ell}\), it is CR transversal precisely when \(z_n \ne - \epsilon\). Furthermore, it has full rank precisely when \(z_n \ne -\epsilon\). Moreover, since
\[
\rho_{\mathcal{W}_{\ell+1}^{2n+3}} \circ R = | \epsilon + z_n|^2 \rho_{\mathbb{H}_{\ell}^{2n+1}},
\]
we see that \(R\) is a local isometric embedding of \(\mathcal{U}^{n+1} \setminus \{ z_n = - \epsilon\}\) into $\Omega_+$. Clearly, 
\[
\mathcal{A}(R)_{\alpha\bba} = \langle i \partial \bar{\partial} \log | \epsilon + z_n|^2 , Z_{\alpha} \wedge Z_{\bba} \rangle = 0, 
\]
i.e., \(R\) has vanishing geometric rank.
\end{example}
That \(I\) and \(R\) are not equivalent is immediate, since \(I\) is irrational while \(R\) and all CR automorphisms of the source and target are rational in the standard coordinates.
\begin{example}\rm When \(n=1\), the map in the previous example specializes to
\begin{equation}\label{e:re}
R_{\epsilon}(z,w) = (z, w(\epsilon + z) - i z - 2i \epsilon z^2, w(1+ \epsilon z)).
\end{equation}
It is interesting to note that \eqref{e:re} defines a map from \(\heis\) into \(\mathcal{W}^5_1\) for arbitrary real value of \(\epsilon\). Thus, we obtain a 1-parameter family of quadratic maps. Moreover, it can be checked that the Hermitian part of the CR Ahlfors tensor is 
\[
\mathcal{A}(R_{\epsilon})_{1\bar{1}} = \frac{1 - \epsilon^2}{1 + |z|^2 + 2\epsilon \Re(z)},
\]
which vanishes identically if and only if \(\epsilon \in \{-1,1\}\). Hence, for \(\epsilon \not\in \{-1,1\}\), the map \(R_{\epsilon}\) does not extend to an isometry, even locally.
\end{example}

It is worth mentioning that \(R_{\epsilon}\) is a sub-family of a 2-parameter family of rational maps
\[
R_{\epsilon,\mu} (z,w) = \left(\frac{z}{1 - \mu w}, \frac{-2i z^2 + \epsilon w(1-\mu w) + z(-i + w +3i\mu w)}{(1-\mu w)^2}, \frac{w(1+ \epsilon z - 2 \mu w)}{(1- \mu w)^2}\right),
\]
which, for any real numbers $\epsilon$ and $\mu$, sends \(\heis\) into \(\mathcal{W}^5_1\).
\begin{example}\rm 
The map
\[
(z,w) \mapsto (0,\phi(z,w),0),
\]
where \(\phi(z,w)\) is an arbitrary CR map in a neighborhood of the origin, sends a neighborhood of the origin in \(\heis\) into \(\mathcal{W}^5_1\). This map is nowhere CR transversal.
\end{example}
We conjecture that an arbitrary CR map from \(\heis\) into \(\mathcal{W}^5_1\) must be equivalent to one of the maps appearing in this section.


\begin{thebibliography}{10}

\bibitem{Alexander77}
H.~Alexander.
\newblock {Proper holomorphic mappings in {$\mathbb C^{n}$}}.
\newblock {\em Indiana Univ. Math. J.}, 26(1):137--146, 1977.

\bibitem{AHJY16}
Jared Andrews, Xiaojun Huang, Shanyu Ji, and Wanke Yin.
\newblock {Mapping in {$\mathbb B^n$} into {$\mathbb B^{3n-3}$}}.
\newblock {\em Comm. Anal. Geom.}, 24(2):279--300, 2016.

\bibitem{BEH08}
M.~Salah Baouendi, Peter Ebenfelt, and Xiaojun Huang.
\newblock {Super-rigidity for {CR} embeddings of real hypersurfaces into
hyperquadrics}.
\newblock {\em Adv. Math.}, 219(5):1427--1445, 2008.

\bibitem{BEH11}
M.~Salah Baouendi, Peter Ebenfelt, and Xiaojun Huang.
\newblock {Holomorphic mappings between hyperquadrics with small signature
difference}.
\newblock {\em Amer. J. Math.}, 133(6):1633--1661, 2011.

\bibitem{baouendi2005super}
M~Salah Baouendi and Xiaojun Huang.
\newblock Super-rigidity for holomorphic mappings between hyperquadrics with
positive signature.
\newblock {\em Journal of Differential Geometry}, 69(2):379--398, 2005.

\bibitem{cartan1935domaines}
Elie Cartan.
\newblock Sur les domaines born\'{e}s homog\`enes de l'espace den variables
complexes.
\newblock {\em Abh. Math. Sem. Univ. Hamburg}, 11(1):116--162, 1935.

\bibitem{CJX06}
Zhihua Chen, Shanyu Ji, and Dekang Xu.
\newblock {Rational holomorphic maps from {$\mathbb B^2$} into {$\mathbb B^N$}
with degree 2}.
\newblock {\em Sci. China Ser. A}, 49(11):1504--1522, 2006.

\bibitem{CJY19}
Xiaoliang Cheng, Shanyu Ji, and Wanke Yin.
\newblock Mappings between balls with geometric and degeneracy rank two.
\newblock {\em Sci. China Math.}, 62(10):1947--1960, 2019.

\bibitem{chern1974}
SS~Chern and JK~Moser.
\newblock Real hypersurfaces in complex manifolds.
\newblock {\em Acta Mathematica}, 133(1):219--271, 1974.

\bibitem{dangelo1988proper}
John~P. D'Angelo.
\newblock Proper holomorphic maps between balls of different dimensions.
\newblock {\em Michigan Math. J.}, 35(1):83--90, 1988.

\bibitem{DAngelo07a}
John~P. D'Angelo.
\newblock {The {$X$}-varieties for {CR} mappings between hyperquadrics}.
\newblock {\em Asian J. Math.}, 11(1):89--102, 2007.

\bibitem{DAngelo15}
John~P. D'Angelo.
\newblock {C{R} complexity and hyperquadric maps}.
\newblock In {\em {Analysis and geometry}}, volume 127 of {\em {Springer Proc.
Math. Stat.}}, pages 17--34. Springer, Cham, 2015.

\bibitem{DAngeloBook21}
John~P. D'Angelo.
\newblock {\em Rational sphere maps}, volume 341 of {\em Progress in
Mathematics}.
\newblock Birkh\"{a}user/Springer, Cham, [2021] \copyright 2021.

\bibitem{doubrov2021}
Boris Doubrov, Alexandr Medvedev, and Dennis The.
\newblock Homogeneous levi non-degenerate hypersurfaces in {$\mathbb C^3$}.
\newblock {\em Mathematische Zeitschrift}, 297(1-2):669--709, 2021.

\bibitem{faran1982maps}
James~J. Faran.
\newblock Maps from the two-ball to the three-ball.
\newblock {\em Inventiones mathematicae}, 68(3):441--475, 1982.

\bibitem{GN21a}
Yun Gao and Sui-Chung Ng.
\newblock Local orthogonal maps and rigidity of holomorphic mappings between
real hyperquadrics.
\newblock 2021.

\bibitem{GLV14}
Dusty Grundmeier, Ji{\v r}{\'i} Lebl, and Liz Vivas.
\newblock {Bounding the rank of {H}ermitian forms and rigidity for {CR}
mappings of hyperquadrics}.
\newblock {\em Math. Ann.}, 358(3-4):1059--1089, 2014.

\bibitem{Hamada05}
Hidetaka Hamada.
\newblock {Rational proper holomorphic maps from {$\mathbb B^n$} into {$\mathbb
B^{2n}$}}.
\newblock {\em Math. Ann.}, 331(3):693--711, 2005.

\bibitem{Huang99}
Xiaojun Huang.
\newblock {On a linearity problem for proper holomorphic maps between balls in
complex spaces of different dimensions}.
\newblock {\em J. Differential Geom.}, 51(1):13--33, 1999.

\bibitem{Huang03}
Xiaojun Huang.
\newblock On a semi-rigidity property for holomorphic maps.
\newblock {\em Asian J. Math.}, 7(4):463--492, 2003.

\bibitem{huang2001mapping}
Xiaojun Huang and Shanyu Ji.
\newblock Mapping {$\mathbb B^n$} into {$\mathbb B^{2n-1}$}.
\newblock {\em Inventiones mathematicae}, 145(2):219--250, 2001.

\bibitem{HJX06}
Xiaojun Huang, Shanyu Ji, and Dekang Xu.
\newblock {A new gap phenomenon for proper holomorphic mappings from {$B^n$}
into {$B^N$}}.
\newblock {\em Math. Res. Lett.}, 13(4):515--529, 2006.

\bibitem{HJY14}
Xiaojun Huang, Shanyu Ji, and Wanke Yin.
\newblock {On the third gap for proper holomorphic maps between balls}.
\newblock {\em Math. Ann.}, 358(1-2):115--142, 2014.

\bibitem{huang2020boundary}
Xiaojun Huang, Jin Lu, Xiaomin Tang, and Ming Xiao.
\newblock Boundary characterization of holomorphic isometric embeddings between
indefinite hyperbolic spaces.
\newblock {\em Advances in Mathematics}, 374:107388, 2020.

\bibitem{Ji10}
Shanyu Ji.
\newblock {A new proof for {F}aran's theorem on maps between {$\mathbb B^2$}
and {$\mathbb B^3$}}.
\newblock In {\em {Recent advances in geometric analysis}}, volume~11 of {\em
{Adv. Lect. Math. (ALM)}}, pages 101--127. Int. Press, Somerville, MA, 2010.

\bibitem{kruglikov}
Boris Kruglikov.
\newblock Submaximally symmetric {CR}-structures.
\newblock {\em J. Geom. Anal.}, 26(4):3090--3097, 2016.

\bibitem{lamel2021cr}
Bernhard Lamel and Duong~Ngoc Son.
\newblock The {CR} ahlfors derivative and a new invariant for spherically
equivalent {CR} maps.
\newblock {\em Annales de l'Institut Fourier}, 71(5):2137--2167, 2021.

\bibitem{Lebl11}
Ji{\v r}{\'i} Lebl.
\newblock {Normal forms, {H}ermitian operators, and {CR} maps of spheres and
hyperquadrics}.
\newblock {\em Michigan Math. J.}, 60(3):603--628, 2011.

\bibitem{LP11}
Ji{\v r}{\'i} Lebl and Han Peters.
\newblock {Polynomials constant on a hyperplane and {CR} maps of
hyperquadrics}.
\newblock {\em Mosc. Math. J.}, 11(2):285--315, 407, 2011.

\bibitem{lee1988pseudo}
John~M Lee.
\newblock Pseudo-einstein structures on {CR} manifolds.
\newblock {\em American Journal of Mathematics}, 110(1):157--178, 1988.

\bibitem{Reiter16a}
Michael Reiter.
\newblock {Classification of holomorphic mappings of hyperquadrics from
{$\mathbb{C}^2$} to {$\mathbb{C}^3$}}.
\newblock {\em J. Geom. Anal.}, 26(2):1370--1414, 2016.

\bibitem{son2018}
Duong~Ngoc Son.
\newblock The {S}chwarzian derivative and {M{\"o}}bius equation on strictly
pseudo-convex {CR} manifolds.
\newblock {\em Communications in Analysis and Geometry}, 26(2):237--269, 2018.

\bibitem{stowe}
Stowe, Dennis. 
\newblock {An Ahlfors derivative for conformal immersions.} 
\newblock {\em The Journal of Geometric Analysis}, 25(1):592--615, 2015.

\bibitem{tanaka1962}
Noboru Tanaka.
\newblock On the pseudo-conformal geometry of hypersurfaces of the space
of n complex variables.
\newblock {\em J. Math. Soc. Japan,} 14, 397-429 (1962).

\bibitem{Webster79b}
S.~M. Webster.
\newblock {On mapping an {$n$}-ball into an {$(n+1)$}-ball in complex spaces}.
\newblock {\em Pacific J. Math.}, 81(1):267--272, 1979.

\bibitem{winkelmann}
J\"{o}rg Winkelmann.
\newblock {\em The classification of three-dimensional homogeneous complex
manifolds}, volume 1602 of {\em Lecture Notes in Mathematics}.
\newblock Springer-Verlag, Berlin, 1995.
\end{thebibliography}
\end{document}